\DeclareMathAlphabet{\mathpzc}{OT1}{pzc}{m}{it}
\newtheorem{thm}{Theorem}[section]
\newtheorem{thmintro}{Theorem}
\newtheorem{lem}[thm]{Lemma}
\newtheorem{claim}[thm]{Claim}
\newtheorem{example}[thm]{Example}
\theoremstyle{definition}
\newtheorem{defn-lem}[thm]{Definition-Lemma}
\theoremstyle{remark}
\newtheorem{rem}[thm]{Remark}
\numberwithin{figure}{section}
\numberwithin{table}{section}
\numberwithin{equation}{section}
\newcommand{\eps}{\varepsilon}
\newcommand{\To}{\longrightarrow}
\newcommand{\trans}{{}^t\!}
\mathchardef\mhyphen="2D
           \newcommand{\eq}[1][r]
   {\ar@<-3pt>@{-}[#1]
    \ar@<-1pt>@{}[#1]|<{}="gauche"
    \ar@<+0pt>@{}[#1]|-{}="milieu"
    \ar@<+1pt>@{}[#1]|>{}="droite"
    \ar@/^2pt/@{-}"gauche";"milieu"
    \ar@/_2pt/@{-}"milieu";"droite"}
\newcommand{\rest}{\mathrm{Rest}}
\newcommand{\Conf}{\mathrm{Conf}}
\newcommand{\conf}{\mathfrak{conf}}
\newcommand{\Diff}{\mathrm{Diff}}
\newcommand{\dS}{\mathrm{dS}}
\newcommand{\AdS}{\mathrm{AdS}}
\renewcommand{\S}{\mathrm{S}} 
\newcommand{\Exterior}{\mathchoice{{\textstyle\bigwedge}}%
    {{\bigwedge}}%
    {{\textstyle\wedge}}%
    {{\scriptstyle\wedge}}}
\newcommand{\R}{\mathbb R}  
\newcommand{\Q}{\mathbb Q}  
\newcommand{\Z}{\mathbb Z}  
\newcommand{\N}{\mathbb N}  
\newcommand{\C}{\mathbb C}  
\newcommand{\rightsetse}[1]{%
\hidewidth\rotatebox[origin=c]{-45}{$\xrightarrow{\kern2em}$}
     \rlap{\raisebox{1ex}
     {$\kern-.8em\scriptstyle #1$}}\hidewidth}
\newcommand{\rightsetsw}[1]{%
\hidewidth\rotatebox[origin=c]{45}{$\xleftarrow{\kern2em}$}
     \rlap{\raisebox{.1ex}
     {$\kern-.8em\scriptstyle #1$}}\hidewidth}
\newcommand{\leftsetsw}[1]{%
\hidewidth
     \llap{\raisebox{1ex}
     {$\scriptstyle #1$\kern-.8em}}
    \rotatebox[origin=c]{45}{$\xleftarrow{\kern2em}$}\hidewidth}
\newcommand{\rightsetnw}[1]{%
\hidewidth\rotatebox[origin=c]{135}{$\xrightarrow{\kern2em}$}
     \rlap{\raisebox{1ex}
     {$\kern-.8em\scriptstyle #1$}}\hidewidth}
\newcommand{\rightsetd}[1]{%
\hidewidth\rotatebox[origin=c]{-90}{$\xrightarrow{\kern2em}$}
     \rlap{{$\scriptstyle #1$}}\hidewidth}
\subjclass[2010]{Primary 
53A30,  
53Z05;  
Secondary
53C10,  
22E70.  
 }
\begin{document}

\baselineskip=15pt

\title{Conformal symmetry breaking operators for anti-de Sitter spaces}

\author{Toshiyuki Kobayashi}
\author{Toshihisa Kubo}
\author{Michael Pevzner}

\address{T. Kobayashi, Kavli Institute for the Physics and Mathematics of the Universe,
and Graduate School of Mathematical Sciences, 
The University of Tokyo, 3-8-1 Komaba, Meguro, Tokyo, 153-8914 Japan}
\email{toshi@ms.u-tokyo.ac.jp}

\address{T. Kubo, Faculty of Economics, Ryukoku University,
67 Tsukamoto-cho, Fukakusa, Fushimi-ku, Kyoto 612-8577, Japan}
\email{toskubo@econ.ryukoku.ac.jp}

\address{M. Pevzner, Laboratoire de Math\'ematiques de Reims,
Universit\'e de Reims-Champagne-Ardenne,
FR 3399 CNRS, F-51687, Reims, France}
\email{pevzner@univ-reims.fr}

\begin{abstract}
For a pseudo-Riemannian manifold $X$ and a totally geodesic hypersurface $Y$,
we consider the problem of constructing and classifying
all linear differential operators $\mathcal{E}^i(X) \to \mathcal{E}^j(Y)$
between the spaces of differential forms 
that intertwine multiplier representations of the Lie algebra of 
conformal vector fields.
Extending the recent results
in the Riemannian setting by
Kobayashi--Kubo--Pevzner [Lecture Notes in Math.~2170, (2016)],
 we construct such differential operators and give a classification
of them in the pseudo-Riemannian setting where 
both $X$ and $Y$
are of constant sectional curvature,
illustrated by the examples of anti-de Sitter spaces and 
hyperbolic spaces.

\vspace{7pt}

\noindent Key words and phrases: 
\emph{conformal geometry, conformal group,
symmetry breaking operator, 
branching law,
holography,
space form,
pseudo-Riemannian geometry,
hyperbolic manifold.}
\end{abstract}

 \maketitle

\section{Introduction}\label{sec:1}

Let $X$ be a manifold endowed with a pseudo-Riemannian
metric $g$. A vector field $Z$ on $X$ is called \emph{conformal}
if there exists $\rho(Z,\cdot) \in C^\infty(X)$ (\emph{conformal factor}) such that
\begin{equation*}
L_Z g = \rho(Z,\cdot) g,
\end{equation*}
where $L_Z$ stands for the Lie derivative with respect to the vector field $Z$.
We denote by $\conf(X)$ 
the Lie algebra of conformal vector fields on $X$.

Let $\mathcal{E}^i(X)$ be the space of (complex-valued) smooth $i$-forms on $X$.
We define a family of 
multiplier representations of the Lie algebra
$\conf(X)$ on  
$\mathcal{E}^i(X)$ 
$(0 \leq i \leq \dim X)$
with parameter $u\in\C$
by
\begin{equation}\label{eqn:rep}
\Pi^{(i)}_u(Z)\alpha:=L_Z\alpha + \frac{1}{2}u\rho(Z,\cdot) \alpha 
\quad
\text{for $\alpha \in \mathcal{E}^i(X)$}.
\end{equation}
For simplicity, we write $\mathcal{E}^i(X)_u$ for the representation 
$\Pi^{(i)}_u$ of $\conf(X)$ on $\mathcal{E}^i(X)$.

For a submanifold $Y$ of $X$, 
conformal vector fields along $Y$ form
a subalgebra
\begin{equation*}
\conf(X;Y):=
\{Z \in \conf(X) :
Z_y \in T_yY \; \text{for all $y\in Y$}\}.
\end{equation*}

If the metric tensor $g$ is nondegenerate when restricted to the submanifold $Y$,
then $Y$ carries a pseudo-Riemannian metric $g\vert_Y$ and 
there is a natural Lie algebra homomorphism 
$\conf(X;Y) \to \conf(Y)$, $Z\mapsto Z\vert_Y$. In this case
we compare the representation
$\Pi^{(i)}_u$ of 
the Lie algebra $\conf(X)$ on $\mathcal{E}^i(X)$
with an analogous representation denoted by the lowercase letter
$\pi^{(j)}_{v}$ of 
the Lie algebra $\conf(Y)$ on $\mathcal{E}^j(Y)$
for $u,v\in\C$.
For this, we analyze \emph{conformal symmetry breaking operators},
that is, linear maps $T\colon \mathcal{E}^i(X)\to \mathcal{E}^j(Y)$ satisfying
\begin{equation}\label{eqn:csbo}
\pi^{(j)}_v(Z\vert_Y)\circ T = T \circ \Pi^{(i)}_u(Z)
\quad
\text{for all $Z \in \conf(X;Y)$}.
\end{equation}
Some of such operators are given as differential operators
(\emph{e.g.}\ \cite{FG13, Juhl, KKP16, KOSS15, KP16}), 
and others are integral operators
and their analytic continuation (\emph{e.g.}\ \cite{KS13}).
We denote by
$\mathrm{Diff}_{\conf(X;Y)}(\mathcal E^i(X)_{u},\mathcal E^j(Y)_{v})$
the space of differential operators satisfying \eqref{eqn:csbo}.

In the case $X=Y$ and $i=j=0$,
the Yamabe operator, the Paneitz operator \cite{P08},
which appears in four-dimensional supergravity \cite{FT},
or more generally, the so-called GJMS operators \cite{GJMS} are 
such differential operators. 
Branson and Gover \cite{Branson, BG05} 
extended such operators to differential forms when
$i=j$.
The exterior derivative $d$ and the codifferential
$d^*$ also give examples of such operators for $j=i+1$ and $i-1$,
respectively. Maxwell's equations in a vacuum can be expressed
in terms of conformally covariant operators on 2-forms in the Minkowski space $\R^{1,3}$
(see \cite{KW14} for a bibliography). 
All these classical examples concern the case where $X=Y$. On the other hand,
the more general setting where $X\supsetneqq Y$
is closely related to branching laws of infinite-dimensional representations
(cf.\ ``Stage C" of branching problems in \cite{K15}). 
In recent years, for $(X,Y)=(\S^n,\S^{n-1})$, such operators in the scalar-valued
case $(i=j=0)$ were classified 
by Juhl \cite{Juhl}, see also \cite{FG13, K14, KOSS15} for different approaches.
More generally, such operators have been constructed and 
classified also in the matrix-valued case ($i,j$ arbitrary) by the authors
\cite{KKP16}.
In this paper, we give a variant of \cite{KKP16} by extending the framework as follows:
\begin{alignat*}{3}
&\text{group of conformal diffeomorphisms} &&\Longrightarrow &&\; \;
\text{Lie algebra of conformal vector fields};\\
&\text{homogeneous spaces} && \Longrightarrow &&\;\; \text{locally homogeneous spaces};\\
&\text{Riemannian setting} && \Longrightarrow &&\; \;\text{pseudo-Riemannian setting}.
\end{alignat*}

\vskip 0.05in

Let $\R^{p,q}$ denote the space $\R^{p+q}$ endowed with
the flat pseudo-Riemannian metric:
\begin{equation}\label{eqn:Rpq}
g_{\R^{p,q}}=dx_1^2+\cdots + dx^2_p - dy^2_{p+1} - \cdots - dy^2_{p+q}.
\end{equation}

For $p,q \in \N$, we define a submanifold of $\R^{1+p+q}$ by
\begin{equation}\label{eqn:Spq}
\S^{p,q}:=
\begin{cases}
\{(\omega_0,\omega, \eta) \in \R^{1+p+q} : \omega_0^2 + |\omega|^2 - |\eta|^2 = 1 \} & 
(p>0),\\
\{(\omega_0,\eta) \in \R^{1+q}: \omega_0 > 0,\; \omega_0^2 - |\eta|^2 =1 \} & (p=0).
\end{cases}
\end{equation}
Then, the metric $g_{\R^{1+p,q}}$ on the ambient space $\R^{1+p+q}$ induces
a pseudo-Riemannian structure on the hypersurface $\S^{p,q}$
of signature $(p,q)$ with constant sectional 
curvature $+1$, which is sometimes referred to as the (positively curved) 
\emph{space form} of a pseudo-Riemannian manifold.
We may regard $\S^{p,q}$ also as a pseudo-Riemannian manifold
of signature $(q,p)$ with constant curvature $-1$ by using $-g_{\R^{1+p,q}}$
instead, giving rise to the negatively curved space form.

\begin{example}
[Riemannian and Lorentzian cases]
\begin{alignat*}{6}
&\S^{n,0} &&= \mathrm{S}^n \quad && \textnormal{(sphere)},\quad
&&\S^{0,n} &&= \mathrm{H}^n \quad && \textnormal{(hyperbolic space)},\\
&\S^{n-1, 1} &&= \dS^n \quad && \textnormal{(de Sitter space)}, \quad
&&\S^{1, n-1} &&= \AdS^n \quad && \textnormal{(anti-de Sitter space)}.
\end{alignat*}
\end{example}

In Theorems \ref{thm:1613108}--\ref{thm:dim} below, 
we assume $n=p+q \geq 3$ and consider
\begin{equation}\label{eqn:XY}
(X,Y)=\text{$(\S^{p,q},\S^{p-1,q})$, $(\S^{p,q},\S^{p,q-1})$,
$(\R^{p,q},\R^{p-1,q})$, or $(\R^{p,q}, \R^{p,q-1})$}.
\end{equation}

\begin{example}\label{ex:confXY}
$\conf(X;Y) \simeq \mathfrak{o}(p,q+1)$ if 
$(X,Y)=(\S^{p,q},\S^{p-1,q})$ or $(\R^{p,q},\R^{p-1,q})$.
\end{example}

Theorem \ref{thm:1613108} below addresses the question if any
conformal symmetry breaking operator defined locally can be extended globally.

\begin{thmintro}[automatic continuity]\label{thm:1613108}
Let $V$ be any open set of $X$ such that 
$V\cap Y$ is connected and nonempty.
Suppose $u,v \in \C$.
Then the map taking the restriction to $V$ induces a bijection:
\begin{equation*}
\mathrm{Diff}_{\conf(X;Y)}(\mathcal{E}^i(X)_u,
\mathcal{E}^j(Y)_v)
\stackrel{\sim}{\To}
\mathrm{Diff}_{\conf(V,V\cap Y)}(\mathcal{E}^i(V)_u,
\mathcal{E}^j(V\cap Y)_v).
\end{equation*}
\end{thmintro}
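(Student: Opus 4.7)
The plan is to prove injectivity and surjectivity of the restriction map separately, using the transitivity of $\conf(X;Y)$ on $Y$.

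For injectivity, suppose $T \in \Diff_{\conf(X;Y)}(\mathcal{E}^i(X)_u, \mathcal{E}^j(Y)_v)$ restricts to zero on $V$. Since $T$ is a differential operator, this means its coefficients (smooth sections over $Y$ of an appropriate jet bundle) vanish on the open subset $V \cap Y$. By Example \ref{ex:confXY}, $\conf(X;Y)$ is isomorphic to $\mathfrak{o}(p,q+1)$ (or its analogue), whose evaluation map into $T_yY$ is surjective at every $y \in Y$. The equivariance relation \eqref{eqn:csbo} is a system of first-order linear PDEs on the coefficients, and integrating it along the local flow of a vector field $Z \in \conf(X;Y)$ produces a multiplicative transport law relating the germ of $T$ at two points of $Y$ connected by the flow. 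Because one can chain finitely many such flow segments so as to move any $y_1 \in Y$ to some $y_0 \in V \cap Y$, the vanishing propagates from $V \cap Y$ to all of $Y$, giving $T = 0$.

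For surjectivity, let $S \in \Diff_{\conf(V, V \cap Y)}(\mathcal{E}^i(V)_u, \mathcal{E}^j(V \cap Y)_v)$. For each $y \in Y$, choose a chain of local flows of elements of $\conf(X;Y)$ whose composition $\phi_y$ carries an open neighborhood $U_y \subset X$ of $y$ onto an open subset of $V$, and define a differential operator $T_y$ on $U_y$ by conjugating $S$ by $\phi_y^{-1}$, twisted with the multiplier factors coming from $\Pi^{(i)}_u$ and $\pi^{(j)}_v$. Two such local definitions $T_y, T_{y'}$ agree on $U_y \cap U_{y'}$ because the transition map $\phi_y \circ \phi_{y'}^{-1}$ is realized, in a neighborhood of any common point, by a chain of local flows of vector fields in $\conf(V, V \cap Y)$, under which $S$ is equivariant by hypothesis. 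Patching these local operators yields a globally defined $T \in \Diff_{\conf(X;Y)}(\mathcal{E}^i(X)_u, \mathcal{E}^j(Y)_v)$ whose restriction to $V$ equals $S$.

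The main obstacle is the careful bookkeeping in passing from infinitesimal equivariance to local flows: the conformal vector fields on $X=\R^{p,q}$ are generally not complete, so no globally defined group action is available, and even on $\S^{p,q}$ one must track the conformal factor $\rho(Z,\cdot)$ when exponentiating the multiplier representations. The resolution is to work with chains of short-time flows and the associated cocycle formulas for $\Pi^{(i)}_u$ and $\pi^{(j)}_v$; once the transport formula along each trajectory and its consistency under chain composition are written down, both the injectivity and surjectivity arguments sketched above go through, yielding the desired bijection.
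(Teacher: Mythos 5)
Your overall strategy (direct propagation along the local flows of $\conf(X;Y)$, using infinitesimal transitivity on $Y$) is genuinely different from the paper's proof in Section \ref{sec:5}, and the injectivity half is sound in spirit: the covariance relation \eqref{eqn:csbo} does give a closed linear transport system for the coefficients of $T$ along integral curves of $Z\vert_Y$, so the vanishing set of the total symbol is invariant under the local flows and, by infinitesimal transitivity and connectedness of $Y$, must be all of $Y$. The surjectivity half, however, has a genuine gap at the well-definedness of the patching. Your hypothesis on $S$ is only \emph{infinitesimal} equivariance on $V$; this integrates to equivariance under a flow $\exp(tZ)$ only along trajectories that remain inside $V$ for all intermediate times. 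The transition map $\phi_y\circ\phi_{y'}^{-1}$ is a composition of flow segments whose intermediate images in general leave $V$, and the assertion that it can be ``realized by a chain of local flows of vector fields in $\conf(V,V\cap Y)$'' --- i.e.\ re-chained so that every intermediate stage stays inside $V$ --- is exactly the unproved (and, for small $V$, non-obvious) claim. Equivalently, the set of pseudogroup elements under which $S$ is covariant wherever this makes sense contains a neighborhood of the identity but is not visibly closed under composition on the required domains, so one cannot conclude it exhausts the connected pseudogroup; this is a monodromy problem that purely local flow-chaining does not resolve.

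The paper avoids this entirely by an algebraic rigidification. After reducing to the flat model via the twisted stereographic projection (Step 1), it uses the translations $\partial/\partial x_k\in\conf(X;Y)$, for which $\rho\equiv 0$, to force the coefficients $a_\alpha$ of $D$ to be \emph{constant} on $V\cap Y$ (Step 2); the operator then extends canonically to a holomorphic differential operator $D_\C$ on $\C^n$ satisfying the complexified covariance (Lemma \ref{lem:161463}). Step 3 restricts $D_\C$ to a Hermitian symmetric real form $G_\R/K_\R\supset G_\R'/K_\R'$ and invokes the automatic continuity and duality theorems of \cite{KP16} to show $D$ arises from an element of the Hom-space \eqref{eqn:Verma} of generalized Verma modules, which is a globally defined object; injectivity and surjectivity of the restriction map both follow at once from Claim \ref{claim:AB}. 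If you want to salvage your approach, the missing ingredient is some global input of this kind (constancy/polynomiality of coefficients, or real-analyticity plus a monodromy computation); as written, the consistency of the local definitions $T_y$ does not follow from the hypotheses.
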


We recall from 
 \cite[Chap.~II]{Segal76} that the
pseudo-Riemannian manifolds $\R^{p,q}$ and $\S^{p,q}$ have
a common conformal compactification:
\begin{equation*}
\xymatrix{
\R^{p,q} \ar@{^{(}->}[rd]& & \S^{p,q}\ar@{_{(}->}[ld]\\
&(\S^{p} \times \S^{q})/\Z_2&
}
\end{equation*}
where $(\S^{p}\times \S^{q})/\Z_2$ denotes the direct product
of $p$- and $q$-spheres equipped with the pseudo-Riemannian metric 
$g_{\S^{p}} \oplus (-g_{\S^{q}})$,
modulo the direct product of antipodal maps, see
also \cite[II, Lem.~6.2 and III, Sect.~2.8]{KO03}.
For $X=\R^{p,q}$ or $\S^{p,q}$, we denote by $\overline{X}$ 
this conformal compactification of $X$.

\begin{thmintro}\label{thm:161456}
(1) \emph{(Automatic continuity to the conformal compactification)}.
Suppose $u,v \in \C$ and $0\leq i \leq n$, $0\leq j \leq n-1$.
Then the map taking the restriction to $X$ is a bijection
\begin{equation*}
\Diff_{\conf(\overline{X};\overline{Y})}(\mathcal{E}^i(\overline{X})_u,
\mathcal{E}^j(\overline{Y})_v)
\stackrel{\sim}{\To}
\Diff_{\conf(X;Y)}(\mathcal{E}^i(X)_u, \mathcal{E}^j(Y)_v).
\end{equation*}
(2) If $n\geq 3$, all these spaces are isomorphic to each other for $(X,Y)$ 
in \eqref{eqn:XY} as far as $(p,q)$ satisfies $p+q=n$.
\end{thmintro}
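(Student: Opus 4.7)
The plan for part~(1) is to derive it directly from Theorem~\ref{thm:1613108}, applied with the pair $(\overline{X},\overline{Y})$ in place of $(X,Y)$ and with the open set $V := X \subset \overline{X}$. Since $Y$ is a closed connected submanifold of the open dense subset $X \subset \overline{X}$, the intersection $V \cap \overline{Y}$ coincides with $Y$, which is open, dense, connected, and nonempty in $\overline{Y}$, so the hypotheses of Theorem~\ref{thm:1613108} are met. The only nontrivial point is to identify the two Lie algebras $\conf(\overline{X};\overline{Y})$ and $\conf(X;Y)$ entering the respective intertwining conditions; for this one invokes the classical Liouville-type rigidity theorem for conformal vector fields on conformally flat pseudo-Riemannian manifolds of dimension $n \geq 3$, under which restriction along $X \hookrightarrow \overline{X}$ induces an isomorphism $\conf(\overline{X})\stackrel{\sim}{\to}\conf(X)$ and tangency along $Y$ corresponds to tangency along $\overline{Y}$ by density.

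For part~(2), the first step is to invoke part~(1) to replace each of the four cases in \eqref{eqn:XY} by its compactified counterpart. Using the common conformal compactification $\overline{\R^{p,q}} = \overline{\S^{p,q}} = (\S^{p}\times\S^{q})/\Z_{2}$ and the analogous statement for the hypersurfaces, the first and third cases both yield the compactified configuration
\[
\bigl((\S^{p}\times\S^{q})/\Z_{2},\,(\S^{p-1}\times\S^{q})/\Z_{2}\bigr),
\]
while the second and fourth both yield
\[
\bigl((\S^{p}\times\S^{q})/\Z_{2},\,(\S^{p}\times\S^{q-1})/\Z_{2}\bigr).
\]
Thus part~(1) already identifies the first case with the third and the second with the fourth for every fixed $(p,q)$ with $p+q=n$.

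The remaining step, which is the main obstacle, is to identify these two compactified configurations with each other; this is where the freedom to vary $(p,q)$ subject to $p+q=n$ enters. My plan is to exploit the metric sign-change duality $g \mapsto -g$, which preserves $\conf$ together with the conformal factor $\rho(Z,\cdot)$ since
\[
L_{Z}(-g) \;=\; -L_{Z}g \;=\; -\rho(Z,\cdot)\,g \;=\; \rho(Z,\cdot)\,(-g),
\]
and hence leaves the multiplier representations $\Pi^{(i)}_{u}$ and $\pi^{(j)}_{v}$ unchanged. Under this duality, the signature-$(p,q)$ pair $(\R^{p,q},\R^{p,q-1})$ becomes the signature-$(q,p)$ pair $(\R^{q,p},\R^{q-1,p})$, which is of the first type in \eqref{eqn:XY} with $(p,q)$ replaced by $(q,p)$. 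Combining this with the two identifications from part~(1) produces a chain of isomorphisms linking all four cases across all splits $p+q=n$. The delicate point to verify is that the parameters $(u,v)$ and the form-degrees $(i,j)$ are not twisted by the duality, which follows from the invariance of $\rho$ under $g\mapsto -g$ and from the fact that the underlying space $\mathcal{E}^{i}$ of smooth $i$-forms does not depend on the metric.
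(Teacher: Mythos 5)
There are genuine gaps in both parts.

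For part (1), your reduction to Theorem~\ref{thm:1613108} is not licensed: that theorem is stated (and proved) only for the four pairs $(X,Y)$ in \eqref{eqn:XY}, and the compactified pair $\bigl((\S^{p}\times\S^{q})/\Z_{2},\,(\S^{p-1}\times\S^{q})/\Z_{2}\bigr)$ is not one of them. Applying it ``with $(\overline{X},\overline{Y})$ in place of $(X,Y)$ and $V:=X$'' assumes exactly the statement you are trying to prove, since the hard direction of part (1) is the surjectivity of restriction, i.e.\ the extension of an operator from $X$ to $\overline{X}$; Theorem~\ref{thm:1613108} only lets you pass from an open set $V$ of $X$ up to $X$, never from $X$ up to $\overline{X}$. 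In the paper the two theorems are in fact proved \emph{simultaneously}: one shows (Claim~\ref{claim:AB}) that every locally defined differential symmetry breaking operator arises from a homomorphism of generalized Verma modules as in \eqref{eqn:Verma} --- via holomorphic extension of the constant-coefficient operator to $\C^{n}$ and the automatic continuity theorem on the type IV Hermitian symmetric space $G_{\R}/K_{\R}$ with $G_{\R}\simeq SO_{0}(n,2)$ --- and every such homomorphism produces an operator on the compactification by the duality theorem. Your Liouville-rigidity identification of $\conf(\overline{X};\overline{Y})$ with $\conf(X;Y)$ is fine, but it does not supply the extension of the \emph{operator}.

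For part (2), the sign-change duality $g\mapsto -g$ is correctly analyzed (it does preserve $\conf$, the conformal factor, and the multiplier representations), but it only relates the signature $(p,q)$ to the signature $(q,p)$. Together with part (1) this identifies, for each \emph{fixed} split, the sphere case with the flat case and the pair $(p,q)$ with $(q,p)$; it gives no way to compare different splits, e.g.\ $(\S^{n},\S^{n-1})$ (that is, $(p,q)=(n,0)$) with $(\dS^{n},\dS^{n-1})$ (that is, $(p,q)=(n-1,1)$), which is the substantive content of the claim that the answer is independent of $(p,q)$ with $p+q=n$. The missing idea is complexification: the operators have constant coefficients, hence extend to holomorphic differential operators on $\C^{n}$, the covariance condition for $\conf(X;Y)\otimes_{\R}\C\simeq\mathfrak{o}(n+1,\C)$ is literally the same for all real forms $\R^{p,q}_{\pm}\subset\C^{n}$, and all the spaces in question are identified with the single Hom-space \eqref{eqn:Verma} of generalized Verma modules over the complexified Lie algebras, which manifestly does not see $(p,q)$.
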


By Theorems \ref{thm:1613108} and \ref{thm:161456},
we see that all conformal symmetry breaking operators 
given locally in some open sets in the 
pseudo-Riemannian case \eqref{eqn:XY} are derived from the 
Riemannian case (\emph{i.e.}\ $p=0$ or $q=0$). 
We note that our representation \eqref{eqn:rep} is normalized
 in a way that $\Pi^{(i)}_{u}$ coincides with
the differential of the representation $\varpi^{(i)}_{u,\delta}$ $(\delta\in \Z/2\Z)$
of the conformal group $\Conf(X)$ introduced in \cite[(1.1)]{KKP16}.
In particular, we can read from \cite[Thm.~1.1]{KKP16}
and \cite[Thm.~2.10]{KKP16} the dimension of 
$\Diff_{\conf(X;Y)}(\mathcal{E}^i(X)_u, \mathcal{E}^j(Y)_v)$ for any 
$i,j,u,v$. 
For simplicity of exposition, we present a coarse feature as follows.

\begin{thmintro}\label{thm:dim}
Suppose $(X,Y)$ is as in \eqref{eqn:XY}, and $V$ any open set of $X$ 
such that $V\cap Y$ is connected and nonempty.
Let $u,v \in \C$, $0\leq i \leq n$, and $0\leq j \leq n-1$.
\begin{enumerate}
\item[(1)] 
For any $u,v \in \C$ and $0\leq i \leq n$, $0\leq j \leq n-1$,
\begin{equation*}
\dim_\C \mathrm{Diff}_{\conf(V;V\cap Y)}
(\mathcal{E}^i(V)_u, \mathcal{E}^j(V\cap Y)_v) \leq 2.
\end{equation*}
\item[(2)] $\mathrm{Diff}_{\conf(V;V\cap Y)}
(\mathcal{E}^i(V)_u, \mathcal{E}^j(V\cap Y)_v) \neq \{0\}$
only if $u, v, i, j$ satisfy
\begin{equation}\label{eqn:nec}
(v+j)-(u+i) \in \N \quad \textnormal{and} \quad
\big(-1 \leq i-j \leq 2 \;\; \textnormal{or} \; \;
n-2 \leq i+j \leq n+1 \big).
\end{equation}
\end{enumerate}
\end{thmintro}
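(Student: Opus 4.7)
The plan is to reduce the local pseudo-Riemannian problem to the global compact Riemannian problem, where the classification of differential symmetry breaking operators is already available in \cite{KKP16}.

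First I would combine Theorems \ref{thm:1613108} and \ref{thm:161456}(1) to identify the space of local differential operators on $(V,V\cap Y)$ with the corresponding space of globally defined operators on the conformal compactification $(\overline{X},\overline{Y})$. By Theorem \ref{thm:161456}(2), this space depends only on $n=p+q$ and not on the particular pair $(X,Y)$ listed in \eqref{eqn:XY}; in particular, I may specialize to the Riemannian case $(X,Y)=(\S^n,\S^{n-1})$ corresponding to $(p,q)=(n,0)$.

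In this compact Riemannian setting, $\conf(\S^n;\S^{n-1})\simeq\mathfrak{o}(n,1)$ integrates to the conformal group $G\simeq O(n+1,1)$, and the Lie-algebra action $\Pi^{(i)}_u$ coincides with the differential of the group representations $\varpi^{(i)}_{u,\delta}$ $(\delta\in\Z/2\Z)$ of \cite[(1.1)]{KKP16}. Since the identity component $G_0$ acts transitively on $\S^n$, every Lie-algebra intertwiner is automatically $G_0$-equivariant and decomposes as a sum of at most two pieces indexed by the compatible parity pairs $(\delta,\delta')\in(\Z/2\Z)^2$. Theorems~1.1 and~2.10 of \cite{KKP16} then classify each piece: it is at most one-dimensional, and vanishes unless $(u,v,i,j)$ satisfies the integrality condition $(v+j)-(u+i)\in\N$ together with the degree constraint $-1\leq i-j\leq 2$ or $n-2\leq i+j\leq n+1$. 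Taking the direct sum over admissible parities yields the bound $\dim\leq 2$ of assertion~(1) and the necessary conditions \eqref{eqn:nec} of assertion~(2).

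The main obstacle is the careful accounting of parities in the passage from group intertwiners to Lie-algebra intertwiners, and the verification that at most two parity pairs $(\delta,\delta')$ are simultaneously compatible with the infinitesimal data so that the bound is genuinely $2$ rather than $4$; once this is settled, both assertions reduce to a direct reading of the quantitative classification in \cite{KKP16}, the nontrivial analytic content having already been absorbed into Theorems \ref{thm:1613108} and \ref{thm:161456}.
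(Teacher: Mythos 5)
Your proposal is correct and follows essentially the same route as the paper: reduction to the Riemannian case via Theorems \ref{thm:1613108} and \ref{thm:161456}, followed by the classification results of \cite[Thms.~1.1 and 2.10]{KKP16}. The parity bookkeeping you identify as the main remaining obstacle is exactly what the paper delegates to \cite[Thm.~2.10]{KKP16} (the passage from the disconnected conformal group, where multiplicities are at most one, to the connected group and hence the Lie algebra, where they can double), so nothing further is needed.
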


A precise condition when the equality holds in Theorem \ref{thm:dim} (1) 
will be explained in Section \ref{sec:7} in the case $n=4$.
We shall give explicit formul{\ae} of generators of
$\Diff_{\conf(X;Y)}(\mathcal{E}^i(X)_u,\mathcal{E}^j(Y)_v)$
in Theorem \ref{thm:StageC} in Section \ref{sec:2} for the flat pseudo-Riemannian
manifolds, and in Theorem \ref{thm:D} in Section \ref{sec:3} for positively (or negatively) 
curved space forms. 
These operators (with ``renormalization") and their compositions by the Hodge 
star operators with respect to the pseudo-Riemannian metric exhaust all differential
symmetry breaking operators (Remark \ref{rem:20161014}).
The proof of Theorems \ref{thm:1613108}--\ref{thm:dim}
will be given in Section \ref{sec:5}.
\vskip 0.1in

Notation. $\N=\{0,1,2,\cdots\}$, $\N_+=\{1,2,\cdots\}$.

\vskip 0.2in

\emph{Acknowledgements}:
The first author was partially supported by 
Grant-in-Aid for Scientific Research (A) (25247006), Japan Society for the Promotion of
Science. All three authors were partially supported by CNRS Grant PICS n$^\mathrm{o}$ 7270.

\section{Conformally covariant symmetry breaking operators---flat case}\label{sec:2}

In this section, we give explicit formul{\ae} of conformal symmetry breaking operators
in the flat pseudo-Riemannian case 
$(X,Y)=(\R^{p,q},\R^{p-1,q})$ or $(\R^{p,q},\R^{p,q-1})$. This extends the results in \cite{KKP16} 
that dealt with the Riemannian case
$(X,Y)=(\R^n,\R^{n-1})$.

We note that
the signature of the metric restricted to nondegenerate hyperplanes of $\R^{p,q}$ is either
$(p-1,q)$ or $(p,q-1)$. 
Thus it is convenient to introduce two types of coordinates in $\R^{p+q}$ 
accordingly. We set 
\begin{align*}
\R^{p,q}_+&=\{(y,x) \in \R^{q+p}\} 
\quad \text{with}\quad
-dy^2_1-\cdots - dy^2_q + dx^2_{q+1} + \cdots + dx^2_{p+q},\\
\R^{p,q}_-&=\{(x,y) \in \R^{p+q}\} 
\quad \text{with} \quad
dx^2_{1} + \cdots + dx^2_{p}-dy^2_{p+1}-\cdots - dy^2_{p+q}.
\end{align*}
Then by letting the last coordinate to be zero,
we get hypersurfaces of $\R^{p,q}$ of two types:
\begin{equation*}
\R^{p-1,q}_+ \subset \R^{p,q}_+ \; (p\geq 1), \quad
\R^{p,q-1}_- \subset \R^{p,q}_- \; (q \geq 1).
\end{equation*}

For $\ell \in \N$ and $\mu \in \C$,  
we define a family of differential operators on $\R^{p+q}$ by using the above
coordinates:
\begin{alignat}{2}
(\mathcal{D}^\mu_{\ell})_+\equiv
(\mathcal{D}^\mu_{\ell})_{\R^{p,q}_+}
&:=\sum_{k=0}^{\left[\frac{\ell}{2}\right]}a_k(\mu,\ell)
\left(\sum_{j=1}^q\frac{\partial^2}{\partial y_j^2}-\sum_{j=q+1}^{n-1}
\frac{\partial^2}{\partial x_j^2}\right)^k
\left(\frac{\partial}{\partial x_n}\right)^{\ell-2k}
\; &&\text{on $\R^{p,q}_+$},\nonumber\\
(\mathcal{D}^\mu_{\ell})_-\equiv
(\mathcal{D}^\mu_{\ell})_{\R^{p,q}_-}
&:= \sum_{k=0}^{\left[\frac{\ell}{2}\right]}
a_k(\mu,\ell) 
\left(\sum_{j=1}^p\frac{\partial^2}{\partial x_j^2} 
- \sum_{j=p+1}^{n-1}\frac{\partial^2}{\partial y_j^2}\right)^k
\left(\frac{\partial}{\partial y_n}\right)^{\ell-2k}
\; &&\text{on $\R^{p,q}_-$},\nonumber
\end{alignat}
where we set for $k \in \N$ with $0\leq 2k\leq \ell$
\begin{equation}\label{eqn:ak}
a_k(\mu,\ell)
:=\frac{(-1)^k2^{\ell-2k}\Gamma(\ell-k+\mu)}
{\Gamma(\mu+\left[\frac{\ell+1}{2}\right])k!(\ell-2k)!}.
\end{equation}
In the case $(p,q,\eps) = (n,0,+)$, 
$(\mathcal{D}^{\mu}_\ell)_{\R^{p,q}_\eps}$ coincides with
the differential operator $\mathcal{D}^\mu_\ell$ in 
\cite[(1.2)]{KKP16}, which was originally introduced in \cite{Juhl}
(up to scalar).

The coefficients $a_k(\mu,\ell)$ 
arise from a hypergeometric polynomial
\begin{equation*}
\widetilde{C}^\mu_{\ell}(t)
:=\sum_{k=0}^{\left[\frac{\ell}{2}\right]}a_k(\mu,\ell)t^{\ell-2k}.
\end{equation*}
This is a ``renormalized" Gegenbauer polynomial \cite[II, (11.16)]{KP16}
in the sense that 
$\widetilde{C}^\mu_\ell(t)$ is nonzero for all $\mu \in \C$ and $\ell \in \N$ and
satisfies the Gegenbauer differential equation:
\begin{equation*}
\left((1-t^2)\frac{d^2}{dt^2}-(2\mu+1)t\frac{d}{dt} + \ell(\ell+2\mu)\right)f(t)=0.
\end{equation*}

We set $\mu=: u+ i -\frac{1}{2}(n-1)$ and
$\gamma(\mu,a):=1$ \text{($a$: odd)},
$\mu+\frac{a}{2}$ \text{($a$: even)}.

For parameters $u\in \C$ and $\ell \in \N$, we define
a family of linear operators
\begin{equation*}
(\mathcal{D}^{i\to j}_{u,\ell})_-\colon
\mathcal{E}^i(\R^{p,q}) \to \mathcal{E}^j(\R^{p,q-1})
\end{equation*}
in the coordinates $(x_1, \cdots, x_p, y_{p+1},\cdots, y_{p+q})$ of $\R^{p,q}_-$ 
as follows: For $j=i-1$ or $i$,
\begin{align*}
(\mathcal{D}^{i\to i-1}_{u,\ell})_{\R^{p,q}_-}
&:= \rest_{y_n=0} \circ 
\left((\mathcal{D}^{\mu+1}_{\ell-2})_-\; dd^* \iota_{\frac{\partial}{\partial y_n}}
+ \gamma(\mu,a)(\mathcal{D}^{\mu+1}_{\ell-1})_-\; d^* 
+\frac{u+2i-n}{2}(\mathcal{D}^\mu_\ell)_-\; \iota_{\frac{\partial}{\partial y_n}}\right),\\
(\mathcal{D}^{i\to i}_{u,\ell})_{\R^{p,q}_-}&:=\rest_{y_n=0} \circ
\left(-(\mathcal{D}^{\mu+1}_{\ell-2})_-\; dd^* - \gamma(\mu-\frac{1}{2},\ell)
(\mathcal{D}^\mu_{\ell-1})_-\; d \iota_{\frac{\partial}{\partial y_n}} 
+ \frac{u+\ell}{2}(\mathcal{D}^\mu_{\ell})_-\right).
\end{align*}
Here $d^*\colon \mathcal{E}^i(\R^{p,q}_-) \to \mathcal{E}^{i-1}(\R^{p,q}_-)$
is the codifferential $d^*_{\R^{p,q}_-}= (-1)^i *^{-1} d *$, 
where $*\equiv *_{\R^{p,q}_-}$ is the Hodge operator with respect to 
the pseudo-Riemannian structure on $\R^{p,q}_-$, 
$\iota_{\frac{\partial}{\partial y_n}}$ is the interior multiplication by
the vector field $\frac{\partial}{\partial y_n}$, and 
$(\mathcal{D}^\mu_{\ell})_-$ acts on $\mathcal{E}^i(\R^{p,q}_-)$ 
as a scalar differential operator.

In contrast 
to the case $j=i-1$ or $i$ where
the family of operators $\mathcal{D}^{i\to j}_{u,\ell}$
contains a continuous parameter $u\in\C$ and
discrete one $\ell \in \N$, 
it turns out that
the remaining case where $j \notin\{ i-1, i\}$ 
or its Hodge dual $j\notin \{n-i+1, n-i\}$ 
is not abundant in 
conformal symmetry breaking operators.
Actually, for $j\in \{i-2,i+1\}$,
we define $(\mathcal{D}^{i\to j}_{u,\ell})_{\R^{p,q}_-}$ only for  
special values of $(i,u,\ell)$ as follows:
\begin{alignat*}{3}
&(\mathcal{D}^{i\to i-2}_{n-2i,1})_{\R^{p,q}_-}
&&:= -\rest_{y_n=0} \circ \iota_{\frac{\partial}{\partial y_n}}d^*
&& \quad (2\leq i\leq n-1),\\
&(\mathcal{D}^{n\to n-2}_{1-n-\ell,\ell})_{\R^{p,q}_-}
&&:=-\rest_{y_n=0} \circ \left(\mathcal{D}^{\frac{3-n}{2}-\ell}_{\ell-1}\right)_-
\iota_{\frac{\partial}{\partial y_n}}d^*&& \quad (\ell \in \N_+),\\
&(\mathcal{D}^{i\to i+1}_{0,1})_{\R^{p,q}_-}
&&:= \rest_{y_n=0}\circ d &&\quad (1\leq i \leq n-2),\\
&(\mathcal{D}^{0\to 1}_{1-\ell, \ell})_{\R^{p,q}_-}
&&:=\rest_{y_n=0}\circ \left(\mathcal{D}^{\frac{3-n}{2}-\ell}_{\ell-1}\right)_- d
&& \quad (\ell \in \N_+).
\end{alignat*}

Likewise, for $\R^{p,q}_+$, we define a family of linear operators
\begin{equation*}
(\mathcal{D}^{i \to j}_{u,\ell})_+\colon
\mathcal{E}^i(\R^{p,q}) \to \mathcal{E}^j(\R^{p-1,q})
\end{equation*}
in the coordinates $(y_1, \cdots, y_q, x_{q+1}, \cdots, x_{p+q})$ 
of $\R^{p,q}_+$ with parameters $u \in \C$ and $\ell \in \N$. 
In this case, the formul{\ae} are essentially the same as those in
the Riemannian case $(q=0)$ which were introduced in 
\cite[(1.4)--(1.12)]{KKP16}. 
(The changes from $(\mathcal{D}^{i\to j}_{u,\ell})_{\R^{p,q}_-}$
to $(\mathcal{D}^{i \to j}_{u,\ell})_{\R^{p,q}_+}$ are made by replacing
$y_n =0$ with $x_n=0$, $\frac{\partial}{\partial y_n}$ with $\frac{\partial}{\partial x_n}$,
and $d^*_{\R^{p,q}_-}$ with $-d^*_{\R^{p,q}_+}$.)
For the convenience of the reader, we give formul{\ae} for $j=i-1$ or $i$ 
and omit the case $j=i-2$ and $i+1$.
\begin{align*}
(\mathcal{D}^{i\to i-1}_{u,\ell})_+
&:=\rest_{x_n=0}\circ 
\left(-(\mathcal{D}^{\mu+1}_{\ell-2})_+dd^*
\iota_{\frac{\partial}{\partial x_n}}
-\gamma(\mu,\ell)(\mathcal{D}^{\mu+1}_{\ell-1})_+d^*
+\frac{u+2i-n}{2}(\mathcal{D}^\mu_\ell)_+\iota_{\frac{\partial}{\partial x_n}}\right),\\
(\mathcal{D}^{i \to i}_{u,\ell})_+
&:=\rest_{x_n=0}\circ \left((\mathcal{D}^{\mu+1}_{\ell-2})_+dd^*
-\gamma(\mu-\frac{1}{2},\ell)(\mathcal{D}^\mu_{\ell-1})_+
d\iota_{\frac{\partial}{\partial x_n}}
+\frac{u+\ell}{2}(\mathcal{D}^\mu_\ell)_+\right).
\end{align*}

If $i=j=0$, the operators $(\mathcal{D}^{i\to j}_{u,\ell})_{\R^{p,q}_+}$
reduce to scalar-valued differential operators that are proportional to 
$(\mathcal{D}^\mu_\ell)_+$ because $d^*$ and $\iota_{\frac{\partial}{\partial x_n}}$
are identically zero on $\mathcal{E}^0(X)=C^\infty(X)$.

Theorem \ref{thm:StageC} below gives 
conformal symmetry breaking operators on the flat pseudo-Riemannian manifolds:

\begin{thmintro}\label{thm:StageC}
Let $p+q \geq 3$, 
$0\leq i \leq p+q$, $0\leq j\leq p+q-1$, and $u,v\in\C$. Assume
$j\in\{i-2, i-1, i,i+1\}$ and $\ell:=(v+j)-(u+i) \in \N$. 
(For $j\in\{i-2, i+1\}$, we need an additional condition on 
the quadruple $(i,j,u,v)$, or equivalently, on $(i,j,u,\ell)$
as indicated in the $\R^{p,q}_-$ case.)
Then
\begin{alignat*}{2}
(\mathcal{D}^{i\to j}_{u,\ell})_+ &\in \mathrm{Diff}_{\conf(\R^{p,q};\R^{p-1,q})}
(\mathcal{E}^i(\R^{p,q})_u, \mathcal{E}^j(\R^{p-1,q})_v)
\quad &&\textnormal{for $p\geq1$},\\
(\mathcal{D}^{i\to j}_{u,\ell})_- &\in 
\mathrm{Diff}_{\conf(\R^{p,q};\R^{p,q-1})}
(\mathcal{E}^i(\R^{p,q})_u, \mathcal{E}^j(\R^{p,q-1})_v)
\quad && \textnormal{for $q\geq1$}.
\end{alignat*}
\end{thmintro}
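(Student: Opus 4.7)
The plan is to verify the intertwining $\pi^{(j)}_v(Z|_Y)\circ T = T\circ\Pi^{(i)}_u(Z)$ with $T=(\mathcal{D}^{i\to j}_{u,\ell})_\pm$ for $Z$ running over a set of generators of the finite-dimensional Lie algebra $\conf(X;Y)$. By the involution $(p,q,+)\leftrightarrow(q,p,-)$ corresponding to reversing the sign of the metric, it suffices to treat $(X,Y)=(\R^{p,q}_-,\R^{p,q-1}_-)$, with coordinates $(x_1,\dots,x_p,y_{p+1},\dots,y_{p+q})$ and $Y=\{y_n=0\}$. A convenient generating set for $\conf(X;Y)\cong\mathfrak{o}(p+1,q)$ consists of: (a) the $n-1$ translations parallel to $Y$; (b) the pseudo-orthogonal subalgebra $\mathfrak{o}(p,q-1)$ acting as linear isometries of $(Y,g|_Y)$; (c) the Euler field $E=\sum_{a=1}^p x_a\partial_{x_a}+\sum_{b=p+1}^{n} y_b\partial_{y_b}$; and (d) the $n-1$ special conformal transformations $N_j$ in directions parallel to $Y$, each of a quadratic form involving the pseudo-norm $Q(x,y)=\sum_{a=1}^p x_a^2-\sum_{b=p+1}^{n} y_b^2$.

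For generators of types (a)--(c), the intertwining is immediate from the compatibility of each building block of $(\mathcal{D}^{i\to j}_{u,\ell})_-$---the tangential Laplacian $\sum_{a\leq p}\partial_{x_a}^2-\sum_{p<b<n}\partial_{y_b}^2$, the normal derivative $\partial/\partial y_n$, the exterior derivative $d$, the pseudo-Riemannian codifferential $d^*_{\R^{p,q}_-}$, the interior product $\iota_{\partial/\partial y_n}$, and the restriction $\rest_{y_n=0}$---with tangential translations, linear pseudo-isometries, and the dilation. The parameter normalization $\mu=u+i-(n-1)/2$ together with the arithmetic condition $\ell=(v+j)-(u+i)$ encode the correct balance of scaling weights under $E$.

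The main obstacle is the intertwining with the special conformal transformations $N_j$. I would follow the template of \cite{KKP16} almost verbatim: expand the commutator $[N_j,(\mathcal{D}^{i\to j}_{u,\ell})_-]$ via the Leibniz rule, using $[N_j,\partial_a]$, $[N_j,d]$, $[N_j,d^*_{\R^{p,q}_-}]$, and $[N_j,\iota_{\partial/\partial y_n}]$, each of which differs from its Riemannian counterpart only by signs dictated by the signature. Regrouping terms by powers of $\partial/\partial y_n$ and of the tangential Laplacian, the coefficients that must cancel reduce to purely algebraic linear relations among $\{a_k(\mu,\ell)\}$ and $\{a_k(\mu\pm 1,\ell\pm 1)\}$---the standard two-term recurrences for the renormalized Gegenbauer polynomial $\widetilde{C}^\mu_\ell(t)$, which are signature-independent because the sign flips in the Laplacian and codifferential are matched exactly by those occurring in $[N_j,\partial_b]$ along time-like directions. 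Thus the algebraic identities required here coincide with those already established in \cite{KKP16} for the Riemannian case, and the verification carries over mutatis mutandis. Finally, the exceptional cases $j\in\{i-2,i+1\}$, defined only at the specific parameters listed, follow from the just-established cases $j\in\{i-1,i\}$ by pre- or post-composition with $d$, $d^*_{\R^{p,q}_-}$, $\iota_{\partial/\partial y_n}$, and the scalar operators $(\mathcal{D}^\mu_\ell)_-$ arising from the $i=j=0$ instance of the construction, whose intertwining properties with the appropriate shifts of $u$ are already in hand.
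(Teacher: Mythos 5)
Your strategy is genuinely different from the paper's. The paper never verifies the intertwining relation generator by generator in the pseudo-Riemannian setting; instead it complexifies. Starting from the Riemannian operator $(\mathcal{D}^{i\to j}_{u,\ell})_{\R^{n,0}_+}$, whose covariance is quoted from \cite[Thms.~1.5--1.8]{KKP16}, it forms the holomorphic extension $(\mathcal{D}^{i\to j}_{u,\ell})_\C\colon\Omega^i(\C^n)\to\Omega^j(\C^{n-1})$, invokes Lemma \ref{lem:161463} to upgrade covariance on one totally real form to holomorphic covariance for the complexified algebra $\mathfrak{o}(n+1,\C)$, and then restricts to the other totally real forms $\iota_\pm(\R^{p,q}_\pm)\subset\C^n$. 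The only computation is Lemma \ref{lem:161482}, a table recording how each building block restricts, which gives $(\rest_{\R^{p,q}_-})_*(\mathcal{D}^{i\to j}_{u,\ell})_\C=e^{-\pi\sqrt{-1}(\ell+i-j)/2}(\mathcal{D}^{i\to j}_{u,\ell})_{\R^{p,q}_-}$. Your plan --- direct verification on translations, linear pseudo-isometries, the Euler field and the special conformal transformations --- is the method by which \cite{KKP16} establishes the Riemannian case, and your generating set for $\conf(X;Y)\simeq\mathfrak{o}(p+1,q)$ is correct; if carried to completion this would be a self-contained alternative, at the cost of redoing the longest computation of \cite{KKP16} with signature-dependent signs.

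The difficulty is that your treatment of the special conformal transformations, which is where all the work lies, is an assertion of the conclusion rather than an argument. The claim that the sign flips in the tangential Laplacian and the codifferential are ``matched exactly'' by those in $[N_j,\partial_b]$, so that the Gegenbauer recurrences apply verbatim, is precisely the statement to be proved, and the paper's own bookkeeping shows it is not a matter of uniform sign changes: the two real forms are related by the phase $e^{-\pi\sqrt{-1}(\ell+i-j)/2}$ distributed \emph{non-uniformly} over the building blocks ($\partial/\partial z_n$ and $\iota_{\partial/\partial z_n}$ each contribute a factor $1/\sqrt{-1}$ under $(\rest_{\R^{p,q}_-})_*$ while $d$ and $d^*$ contribute nothing), and correspondingly the first two of the three terms in the definitions of $(\mathcal{D}^{i\to i-1}_{u,\ell})_+$ and $(\mathcal{D}^{i\to i-1}_{u,\ell})_-$ carry opposite relative signs. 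A ``mutatis mutandis'' transfer that does not track these relative phases term by term can end up verifying covariance for an operator with the wrong internal signs. To close the gap you should either carry out the commutator expansion explicitly with the signature signs, or observe --- which is exactly the paper's proof --- that the covariance identity is polynomial in the coordinates and therefore continues holomorphically from the real form where it is already known; the second option makes your assertion rigorous in one line and renders the generator-by-generator check unnecessary. The same remark applies to the exceptional cases $j\in\{i-2,i+1\}$: these operators are defined directly as $\rest\circ(\mathcal{D}^{\,\cdot}_{\ell-1})_-\iota_{\partial/\partial y_n}d^*$ or $\rest\circ(\mathcal{D}^{\,\cdot}_{\ell-1})_-d$ at isolated parameters, and their covariance is again most safely obtained by restriction from the complexification rather than by the composition argument you sketch.
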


\begin{rem}
In recent years, special cases of Theorem \ref{thm:StageC} have been obtained
as below.
\begin{enumerate}
\item $i=j=0$, $\eps=+$, $q=0$: \cite{Juhl},
see also \cite{FG13, K14, KOSS15} for different approaches.
\item $i=j=0$, $\eps=+$, $p$ and $q$ are arbitrary: \cite[Thm.~4.3]{KOSS15}.
\item $i$ and $j$ are arbitrary, $\eps=+$, $q=0$:
\cite[Thms.~1.5, 1.6, 1.7 and 1.8]{KKP16}.
\end{enumerate}

The main machinery of finding symmetry breaking operators
in various geometric gettings in
\cite{KKP16}, \cite{KOSS15}, and \cite[II]{KP16}
is the ``algebraic Fourier transform of generalized Verma modules"
(\emph{F-method} \cite{K13}), see \cite[I]{KP16} for a detailed 
exposition of the F-method.
\end{rem}

The proof of Theorem \ref{thm:StageC} will be given in Section \ref{sec:6}.

\begin{rem}\label{rem:20161014}
There are a few values
of parameters $(u,\ell,i,j)$ for which $(\mathcal{D}^{i\to j}_{u,\ell})_{\pm}$
vanishes, but we can define nonzero conformal symmetry breaking operators for 
such values by ``renormalization" as in \cite[(1.9), (1.10)]{KKP16}. 
The ``renormalized" operators $(\widetilde{\mathcal{D}}^{i\to j}_{u,\ell})_{\pm}$ and
the compositions $*\circ(\widetilde{\mathcal{D}}^{i\to j}_{u,\ell})_{\pm}$ by the Hodge operator
$*$ for $\R^{p-1,q}$ or $\R^{p,q-1}$
exhaust all conformal differential symmetry breaking operators in our framework,
as is followed from Theorem \ref{thm:161456} (2) and from the classification theorem
\cite[Thms.~1.1 and 2.10]{KKP16} in the Riemannian setting.
\end{rem}

\section{Symmetry breaking operators in the space forms}\label{sec:3}

In this section we explain how to transfer the formul{\ae} for
symmetry breaking operators in the flat case (Theorem \ref{thm:StageC})
to the ones in the space form $\S^{p,q}$ (see Theorem \ref{thm:D}).
In particular, Theorem \ref{thm:D} gives conformal symmetry breaking operators
in the anti-de Sitter space (Example \ref{ex:AdSH}).

We consider the following open dense subsets of the flat space $\R^{p,q}$
and the space form $\S^{p,q}$ (see \eqref{eqn:Spq}), respectively:
\begin{alignat*}{2}
(\R^{p,q}_-)' &:=\{(x,y) \in \R^{p+q} : |x|^2-|y|^2 \neq -4\},&&\\
(\S^{p,q})'&:= \{(\omega_0, \omega, \eta) \in \S^{p,q} : \omega_0 \neq -1\}
&&\subset \R^{1+p+q}.
\end{alignat*}

We define a variant of the stereographic projection and its inverse by
\begin{alignat*}{2}
\Psi\colon (\S^{p,q})'&\To (\R^{p,q}_-)',
&&\quad
(\omega_0,\omega,\eta)\mapsto
\frac{2}{1+\omega_0}(\omega,\eta),\\
\Phi\colon (\R^{p,q}_-)' &\To (\S^{p,q})',
&&\quad
(x,y)\mapsto \frac{1}{|x|^2-|y|^2 + 4}(4-|x|^2+|y|^2, 4x, 4y).
\end{alignat*}

\begin{lem}\label{lem:161428}
The map $\Phi$ is a conformal diffeomorphism from 
$(\R^{p,q}_-)'$ onto $(\S^{p,q})'$
with its inverse $\Psi$, and the conformal factor is given by
\begin{align*}
\Phi^*g_{\S^{p,q}} =\frac{16}{(|x|^2-|y|^2+4)^2}g_{\R^{p,q}_-},
\qquad
\Psi^*g_{\R^{p,q}_-}=\frac{4}{(1+\omega_0)^2}g_{\S^{p,q}}.
\end{align*}
\end{lem}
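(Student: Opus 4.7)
The plan is to verify three claims in order: (i) $\Phi$ lands in $(\S^{p,q})'$, (ii) $\Psi\circ\Phi=\mathrm{id}$ and $\Phi\circ\Psi=\mathrm{id}$, and (iii) the conformal factor identity for $\Phi^{*}$. The companion formula for $\Psi^{*}$ then follows by a one-line inversion.

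Set $D:=|x|^{2}-|y|^{2}+4$ and let $(\omega_{0},\omega,\eta):=\Phi(x,y)$. A direct expansion gives
\[
\omega_{0}^{2}+|\omega|^{2}-|\eta|^{2}=\frac{(4-|x|^{2}+|y|^{2})^{2}+16(|x|^{2}-|y|^{2})}{D^{2}}=\frac{(|x|^{2}-|y|^{2}+4)^{2}}{D^{2}}=1,
\]
and $\omega_{0}=-1$ would force $4-|x|^{2}+|y|^{2}=-D$, i.e.\ $8=0$, so $\Phi$ lands in $(\S^{p,q})'$. The identities $\Psi\circ\Phi=\mathrm{id}$ and $\Phi\circ\Psi=\mathrm{id}$ are short algebraic checks using $1+\omega_{0}=8/D$ on the image of $\Phi$ and, conversely, $|x|^{2}-|y|^{2}=4(1-\omega_{0})/(1+\omega_{0})$ on the image of $\Psi$.

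The main step is the computation of $\Phi^{*}g_{\S^{p,q}}$. Since $\Phi$ takes values in $\S^{p,q}$ and the metric on $\S^{p,q}$ is the restriction of the ambient pseudo-Riemannian form $d\omega_{0}^{2}+|d\omega|^{2}-|d\eta|^{2}$ to $T\S^{p,q}$, it suffices to pull back the ambient form. Writing $\omega_{0}=8/D-1$, $\omega_{i}=4x_{i}/D$, $\eta_{j}=4y_{j}/D$ and differentiating, the crucial identity is
\[
dD=2\Bigl(\sum_{i}x_{i}\,dx_{i}-\sum_{j}y_{j}\,dy_{j}\Bigr),
\]
which collapses the cross terms in $\sum_{i}d\omega_{i}^{2}-\sum_{j}d\eta_{j}^{2}$ to $-D\,dD^{2}$; combined with the pure $dD^{2}$ contribution $(|x|^{2}-|y|^{2})\,dD^{2}=(D-4)\,dD^{2}$, this gives
\[
\sum_{i}d\omega_{i}^{2}-\sum_{j}d\eta_{j}^{2}=\frac{16}{D^{4}}\bigl(D^{2}g_{\R^{p,q}_{-}}-4\,dD^{2}\bigr).
\]
Adding $d\omega_{0}^{2}=64\,dD^{2}/D^{4}$ then cancels the residual $dD^{2}$ term exactly and yields $\Phi^{*}g_{\S^{p,q}}=\tfrac{16}{D^{2}}\,g_{\R^{p,q}_{-}}$, the first asserted formula.

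The second formula is obtained immediately from the first by applying $(\Phi^{-1})^{*}=\Psi^{*}$: if $\Phi^{*}g=f\,g'$ with $f=16/D^{2}$, then $\Psi^{*}g'=(f\circ\Psi)^{-1}g$, and the identity $D\circ\Psi=8/(1+\omega_{0})$ gives $(f\circ\Psi)^{-1}=4/(1+\omega_{0})^{2}$, as claimed. The step that requires genuine care is the metric pullback, because of the sign bookkeeping between $x$- and $y$-variables when expanding $|d\omega|^{2}-|d\eta|^{2}$; however, the symmetric form of $dD$ makes all cross terms align in the same combination, so the cancellation is uniform and no case analysis by signature $(p,q)$ is needed.
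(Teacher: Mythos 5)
Your argument is correct, and it is genuinely different in character from what the paper does: the paper disposes of this lemma with a citation to \cite[I, Lem.~3.3]{KO03}, whereas you give a self-contained verification. I checked the computation: with $D=|x|^2-|y|^2+4$ one indeed has $(4-|x|^2+|y|^2)^2+16(|x|^2-|y|^2)=D^2$; the cross terms in $\sum_i d\omega_i^2-\sum_j d\eta_j^2$ combine via $dD=2(\sum_i x_i\,dx_i-\sum_j y_j\,dy_j)$ into $-16\,dD^2/D^3$, the pure $dD^2$ contribution is $16(D-4)\,dD^2/D^4$, and their sum $-64\,dD^2/D^4$ is exactly cancelled by $d\omega_0^2=64\,dD^2/D^4$, giving $\Phi^*g_{\S^{p,q}}=\tfrac{16}{D^2}g_{\R^{p,q}_-}$; the passage to the $\Psi^*$ formula via $(f\circ\Psi)^{-1}$ together with $D\circ\Psi=8/(1+\omega_0)$ is also right. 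What your route buys is transparency: one sees explicitly that the signature enters only through $dD$ and $g_{\R^{p,q}_-}$ in the same combination, so no case analysis in $(p,q)$ is needed, which is exactly the uniformity the paper exploits when transferring operators between real forms. One caveat you inherit from the statement rather than introduce: for $p=0$ the paper defines $\S^{0,q}$ as the sheet $\omega_0>0$, and $\Phi$ sends $\{|y|>2\}$ to the opposite sheet, so your check that $\omega_0^2+|\omega|^2-|\eta|^2=1$ establishes landing in the quadric but not in $\S^{0,q}$ as defined; surjectivity onto $(\S^{0,q})'$ in that degenerate case requires either shrinking the domain or reading $\S^{0,q}$ as the full quadric.
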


\begin{proof}
See \cite[I, Lem.~3.3]{KO03}, for instance.
\end{proof}

The pseudo-Riemannian spaces
$\R^{p,q}_+$ and $\R^{p,q}_-$ are obviously isomorphic to each other by
switch of the coordinates
\begin{equation*}
s\colon \R^{p,q}_+ \stackrel{\sim}{\To}\R^{p,q}_-
\quad (y,x) \mapsto (x, y).
\end{equation*}
We set
\begin{equation*}
\Phi_-:=\Phi,
\quad
\Phi_+:=\Phi\circ s,
\quad
\Psi_-:=\Psi,
\quad
\Psi_+:=s\circ \Psi.
\end{equation*}
For $v \in \C$, we define ``twisted pull-back" of differential forms \cite[I, (2.3.2)]{KO03}:
\begin{align}
(\Phi_\pm)_v^*\colon\mathcal{E}^j\left((\R^{p,q}_\pm)'\right)
&\To
\mathcal{E}^j((\S^{p,q})'),
\quad \alpha \mapsto \left(\frac{1+\omega_0}{2}\right)^{-v} \Phi^*\alpha, \label{eqn:tp}\\
(\Psi_\pm)_v^*\colon \mathcal{E}^j((\S^{p,q})') 
&\To
\mathcal{E}^j((\R^{p,q}_\pm)'),
\quad \beta \mapsto \left(\frac{|x|^2-|y|^2+4}{4} \right)^{-v} \Psi^*\beta.
\end{align}
Then $(\Psi_\pm)^*_v$ is the inverse of $(\Phi_\pm)^*_v$
in accordance with $\Psi_\pm = (\Phi_\pm)^{-1}$.

We realize the space forms $\S^{p-1,q}$ $(p \geq 1)$ and $\S^{p, q-1}$ $(q\geq 1)$
as totally geodesic hypersurfaces of $\S^{p,q}$ by letting $\omega_p = 0$ 
and $\eta_q =0$, respectively.
Then $\Phi_\pm$ induce the following diffeomorphisms between hypersurfaces.

\begin{align*}
\xymatrix@R=.5pc{
(\R^{p,q}_-)' \ar[r]^{\sim}_{\Phi_-} & (\S^{p,q})' & (\R^{p,q}_+)' 
\ar[r]^{\sim}_{\Phi_+}& (\S^{p,q})'\\
\cup & \cup & \cup & \cup\\
(\R^{p,q-1}_-)' \ar[r]^{\sim} &(\S^{p,q-1})', & (\R^{p-1,q}_+)' \ar[r]^\sim &(\S^{p-1,q})'
}
\end{align*}

We are ready to transfer the formul{\ae} of conformal symmetry breaking operators
for the flat case (Theorem \ref{thm:StageC}) to those for negatively (or positively)
curved spaces:

\begin{thmintro}\label{thm:D}
For $\eps = \pm$,
let $(\mathcal{D}^{i\to j}_{u,\ell})_{\eps}$ be as in Theorem \ref{thm:StageC}.
Then $(\Phi_{\eps})^*_v\circ (\mathcal{D}^{i\to j}_{u,\ell})_\eps
\circ (\Psi_{\eps})^*_u$, originally
defined in the open dense set $(\S^{p,q})'$ of the space form $\S^{p,q}$, 
extends uniquely
to the whole $\S^{p,q}$ and gives an element in 
$\mathrm{Diff}_{\conf(X;Y)}(\mathcal{E}^i(X)_u, \mathcal{E}^j(Y)_v)$
where
$(X,Y) = (\S^{p,q}, \S^{p,q-1})$ for $\eps =- $ and 
$(X,Y) = (\S^{p,q}, \S^{p-1,q})$ for $\eps= + $.
\end{thmintro}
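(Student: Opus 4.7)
The strategy is to reduce the statement to Theorem~\ref{thm:StageC} (the flat case) via the conformal diffeomorphism $\Phi_\epsilon$ of Lemma~\ref{lem:161428}, and then to invoke Theorem~\ref{thm:1613108} to extend the resulting operator from the open dense subset $(\S^{p,q})'$ to all of $\S^{p,q}$. The first ingredient is the conformal covariance of the twisted pull-back $(\Phi_\epsilon)^*_u$ of \eqref{eqn:tp}: since $\Phi_\epsilon$ is conformal with the explicit conformal factor given in Lemma~\ref{lem:161428}, the twisting weight $\bigl(\tfrac{1+\omega_0}{2}\bigr)^{-u}$ exactly absorbs the conformal weight of $Z$, so that for every conformal vector field $Z$ on $(\S^{p,q})'$ with transported field $Z' := (\Phi_\epsilon^{-1})_* Z$ on $(\R^{p,q}_\epsilon)'$ one has $\Pi^{(i)}_u(Z) \circ (\Phi_\epsilon)^*_u = (\Phi_\epsilon)^*_u \circ \Pi^{(i)}_u(Z')$. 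This is the standard identity \cite[I,~(2.3.2)]{KO03}, and the analogous assertion for $(\Psi_\epsilon)^*_v$ on the target side follows symmetrically.

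Since $\Phi_\epsilon$ sends the totally geodesic hyperplane $\R^{p,q-1}_-$ (resp.\ $\R^{p-1,q}_+$) diffeomorphically onto $Y\cap(\S^{p,q})'$, the induced isomorphism of conformal Lie algebras restricts to an isomorphism of the subalgebras tangent to the respective hypersurfaces. Combined with the covariance property above, the conjugation $T \mapsto (\Phi_\epsilon)^*_v \circ T \circ (\Psi_\epsilon)^*_u$ carries the flat operator $(\mathcal{D}^{i\to j}_{u,\ell})_\epsilon$ provided by Theorem~\ref{thm:StageC} to a $\conf((\S^{p,q})';\,Y\cap(\S^{p,q})')$-equivariant differential operator $\widetilde{T}$ defined on the open dense subset $(\S^{p,q})'$ of $\S^{p,q}$. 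To globalize it I would apply Theorem~\ref{thm:1613108} with $V := (\S^{p,q})'$: the set $V\cap Y$ is obtained from $Y$ by removing the antipodal locus $\{\omega_0 = -1\}$, which has positive codimension, and is therefore open, dense, and connected for $n\geq 3$. Theorem~\ref{thm:1613108} then yields a unique extension of $\widetilde{T}$ to an element of $\mathrm{Diff}_{\conf(X;Y)}(\mathcal{E}^i(X)_u,\mathcal{E}^j(Y)_v)$, which is the operator claimed in the theorem.

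The genuinely substantive point is the covariance check recorded above: while formally routine, it must be carried out for $j$-forms rather than for scalar functions, so the interaction of the conformal factor with the Hodge-theoretic ingredients $d$, $d^*$ and $\iota_{\partial/\partial x_n}$ appearing in $(\mathcal{D}^{i\to j}_{u,\ell})_\epsilon$ has to be tracked cleanly; this is where one leans on the formalism of twisted pull-backs developed in \cite{KO03}. The globalization step is conceptually heavier but is outsourced to Theorem~\ref{thm:1613108}, whose proof is deferred to Section~\ref{sec:5}; conditional on that, no further calculation is required.
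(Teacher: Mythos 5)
Your overall route is exactly the paper's: conjugate the flat operator of Theorem~\ref{thm:StageC} by the twisted pull-backs to obtain an element of $\Diff_{\conf(V;V\cap Y)}(\mathcal{E}^i(V)_u,\mathcal{E}^j(V\cap Y)_v)$ on $V=(\S^{p,q})'$ (the paper outsources this covariance check to the Riemannian computation in \cite[Prop.~11.3]{KKP16}, which is the same twisted pull-back formalism you invoke from \cite{KO03}), and then globalize by the automatic continuity theorem, Theorem~\ref{thm:1613108}.

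One supporting claim in your write-up is wrong, and it is precisely the hypothesis needed to quote Theorem~\ref{thm:1613108} verbatim: you assert that $V\cap Y$ is connected because the removed locus $\{\omega_0=-1\}\cap Y$ has positive codimension. In the pseudo-Riemannian setting that locus is a null cone of codimension \emph{one} in $Y$ whenever $Y$ carries coordinates of both signs, and removing a codimension-one set can (and here does) disconnect: under the stereographic projection, $V\cap Y$ is identified with $\{|x|^2-|y|^2\neq -4\}$ in the flat model of $Y$, which has the additional component $\{|x|^2-|y|^2<-4\}$ as soon as $Y$ has at least one negative direction --- already for $Y=\S^{1,n-2}=\mathrm{AdS}^{n-1}$ in the headline anti-de Sitter example with $\eps=-$. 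So Theorem~\ref{thm:1613108} cannot be applied with $V=(\S^{p,q})'$ as stated. The repair is routine but should be recorded: choose an open $V''\subset V$ with $V''\cap Y$ connected and nonempty; Theorem~\ref{thm:1613108} applied to $V''$ produces a unique global covariant operator $\overline{D}$ agreeing with your conjugated operator near that component (uniqueness of the global extension also follows, since restriction to $V''$ factors through restriction to $V$), and one then checks that $\overline{D}$ agrees with the conjugated operator near the remaining components of $V\cap Y$ as well, e.g.\ by running the same argument component by component and using injectivity of restriction to identify the resulting global operators. The paper's own proof is silent on this point, so your proposal matches it in substance; but as written your connectivity argument is not correct.
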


Here, by a little abuse of notation, we have used the symbol $(\Phi_{\eps})^*_v$ to denote
the operator in the $(n-1)$-dimensional case.

Admitting Theorem \ref{thm:1613108}, we give a proof of Theorem \ref{thm:D}.

\begin{proof}[Proof of Theorem \ref{thm:D}]
Similarly to \cite[Prop.~11.3]{KKP16} in the Riemannian case ($q=0$ and $\eps=+$),
the composition $(\Phi_\eps)^*_v\circ (\mathcal{D}^{i\to j}_{u,\ell})_\eps \circ
(\Phi_\eps)^*_u$ gives an element in $\Diff_{\conf(V;V\cap Y)}(\mathcal{E}^i(V)_u,
\mathcal{E}^j(V\cap Y)_v)$ for $V=(\S^{p,q})'$. Then this operator extends to 
the whole $X=\S^{p,q}$ by Theorem \ref{thm:1613108}.
\end{proof}

The $n$-dimensional anti-de Sitter space $\mathrm{AdS}^n(=\S^{1,n-1})$ contains
the hyperbolic space $\mathrm{H}^{n-1} (=\S^{0,n-1})$ and the anti-de Sitter
space $\mathrm{AdS}^{n-1} (=\S^{1,n-2})$ as totally geodesic hypersurfaces.

\begin{example}[hypersurfaces in the anti-de Sitter space]\label{ex:AdSH}
For $(p,q) = (1,n-1)$, the formul{\ae} in Theorem \ref{thm:D} give conformal
symmetry breaking operators as follows.
\begin{alignat*}{3}
&\mathcal{E}^i(\mathrm{AdS}^n)_u
&&\To \mathcal{E}^j(\mathrm{H}^{n-1})_v 
&&\quad \textnormal{for $\eps = +$},\\
&\mathcal{E}^i(\mathrm{AdS}^n)_u
&&\To \mathcal{E}^j(\mathrm{AdS}^{n-1})_v
&&\quad \textnormal{for $\eps = -$}.
\end{alignat*}
\end{example}

\section{Idea of holomorphic continuation}\label{sec:4}

In this section we explain an idea of holomorphic continuation that 
will bridge between differential symmetry breaking operators in the 
Riemannian setting and those in the non-Riemannian setting.

We begin with an observation
from Example \ref{ex:confXY} that
for any $p,q$ with $p \geq 1$
the Lie algebras
\begin{equation*}
\conf(\S^{p,q};\S^{p-1,q})\simeq \conf(\R^{p,q};\R^{p-1,q})
\simeq \mathfrak{o}(p,q+1)
\end{equation*}
have the same complexification $\mathfrak{o}(n+1,\C)$ as far as $p+q=n$.
In turn to geometry, we shall compare (real) conformal vector fields on 
pseudo-Riemannian manifolds $\S^{p,q}$ or $\R^{p,q}$ of various 
signatures $(p,q)$ via holomorphic 
vector fields on a complex manifold
which contains $\S^{p,q}$ or $\R^{p,q}$ as totally real submanifolds.

Let $X_{\C}$ be a connected complex manifold, and
$\Omega^i(X_{\C})$ the space of holomorphic $i$-forms on $X_{\C}$.
If $X$ is a totally real submanifold, then the restriction map
\begin{equation*}
\rest_X\colon \Omega^i(X_{\C}) \To \mathcal{E}^i(X)
\end{equation*}
is obviously injective.

\begin{defn-lem}\label{deflem:161003}
Suppose $D_\C\colon \Omega^i(X_{\C}) \to \Omega^j(X_{\C})$
is a holomorphic differential operator. Then there is a 
unique differential operator $E \colon \mathcal{E}^i(X) \to \mathcal{E}^j(X)$,
such that 
\begin{equation*}
E\vert_{V\cap X} \circ \rest_{V\cap X}\alpha = \rest_{V\cap X} \circ D_\C\vert_V \alpha
\end{equation*}
for any open set $V$ of $X_{\C}$ with $V\cap X \neq \emptyset$ 
and for any $\alpha \in \Omega^i(V)$.
We say that $D_\C$ is the \emph{holomorphic extension} of $E$.
We write $(\rest_X)_*D_\C$ for $E$.
\end{defn-lem}

If $X$ is a real analytic, pseudo-Riemannian manifold with 
complexification $X_\C$, then a holomorphic analogue of the action \eqref{eqn:rep}
makes sense  by analytic continuation for $Z \in \conf(X)\otimes_\R\C$:
$L_Z$ being understood as the holomorphic Lie derivative with respect to
a holomorphic extension of the vector field $Z$ in 
a complex neighbourhood $U$ of $X$, which acts on
$\alpha \in \Omega^i(U)$; and the conformal factor $\rho(\cdot, \cdot)$
being understood as its holomorphic extension 
(complex linear in the first argument).
Likewise for the pair $X \supset Y$ of pseudo-Riemannian manifolds
 with complexification $X_\C \supset Y_\C$,
 we may consider a holomorphic analogue of 
the covariance condition \eqref{eqn:csbo}. Then we have:

\begin{lem}\label{lem:161463}
Suppose $D_\C \colon \Omega^i(X_\C) \to \Omega^j(Y_\C)$ 
is a holomorphic differential operator, and 
$D=(\rest_{X})_*(D_\C)$. Then $D\colon \mathcal{E}^i(X) \to \mathcal{E}^j(Y)$
satisfies the conformal covariance \eqref{eqn:csbo} if and only if
\begin{equation*}
\pi^{(j)}_v(Z\vert_{Y_\C})\circ D_\C\,\alpha = D_\C \circ \Pi^{(i)}_u(Z)\alpha
\end{equation*}
for any $Z \in \conf(X;Y)\otimes_\R\C$,
any open subset $U$ of $X_\C$ with $U\cap Y_\C \neq \emptyset$
and any $\alpha \in \Omega^i(U)$.
\end{lem}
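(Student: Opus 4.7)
The plan is to derive both implications from two uniqueness principles: a smooth differential operator obtained from a holomorphic one via Definition-Lemma \ref{deflem:161003} is characterized by its action on traces of holomorphic sections; and a holomorphic section on $Y_\C$ is determined by its restriction to the totally real submanifold $Y$ (the identity principle). Together with the $\C$-linearity of the covariance identity in $Z$, these reduce the argument to a preparatory compatibility step. As preparation, I would verify that for every real $Z \in \conf(X;Y)$ the smooth action $\Pi^{(i)}_u(Z)$ on $\mathcal{E}^i(X)$ coincides with $(\rest_X)_\ast$ applied to its holomorphic counterpart on $\Omega^i(U)$, and analogously for $\pi^{(j)}_v(Z\vert_Y)$ on the $Y$-side; this follows by taking a holomorphic extension $\widetilde Z$ of $Z$ and of the conformal factor $\rho(Z,\cdot)$, which exist because the pseudo-Riemannian metric is real-analytic and conformal vector fields are therefore real-analytic.

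For the direction $(\Leftarrow)$, assume $D_\C$ satisfies the holomorphic covariance. Restricting the hypothesis to real $Z \in \conf(X;Y) \subset \conf(X;Y)\otimes_\R\C$, one has the equality of holomorphic operators $D_\C \circ \Pi^{(i)}_u(Z) = \pi^{(j)}_v(Z\vert_{Y_\C}) \circ D_\C$ on $\Omega^i(U)$. Applying $(\rest_X)_\ast$ to both sides and using the preparatory compatibility together with the observation that $(\rest)_\ast$ is compatible with composition of holomorphic differential operators, one obtains $D \circ \Pi^{(i)}_u(Z) = \pi^{(j)}_v(Z\vert_Y) \circ D$ as smooth differential operators $\mathcal{E}^i(X) \to \mathcal{E}^j(Y)$, which is precisely \eqref{eqn:csbo}.

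For the direction $(\Rightarrow)$, assume $D$ satisfies \eqref{eqn:csbo}. Evaluating both sides on $\rest_X\alpha$ for a holomorphic $\alpha \in \Omega^i(U)$ and unwinding via the preparatory compatibility, the assumption says that the two holomorphic forms $D_\C \circ \Pi^{(i)}_u(Z)\alpha$ and $\pi^{(j)}_v(Z\vert_{Y_\C}) \circ D_\C\,\alpha$ in $\Omega^j(U\cap Y_\C)$ have the same trace on $U\cap Y$; since $Y$ is totally real in $Y_\C$, the identity principle forces the two forms to agree on all of $U\cap Y_\C$. This gives the holomorphic covariance for real $Z$, and $\C$-linearity of both sides in $Z$ extends it to arbitrary $Z \in \conf(X;Y)\otimes_\R\C$. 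The main subtlety throughout is the preparatory compatibility, since the multiplier action \eqref{eqn:rep} mixes a Lie derivative with multiplication by the conformal factor $\rho(Z,\cdot)$ and each ingredient must be carefully matched with its holomorphic extension; once this is in place, both directions become a clean application of Definition-Lemma \ref{deflem:161003} and the identity theorem.
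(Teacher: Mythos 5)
Your argument is correct and is precisely the one the paper implicitly relies on: Lemma \ref{lem:161463} is stated in Section \ref{sec:4} without proof, and your three ingredients --- compatibility of the real and holomorphic multiplier actions under $(\rest_X)_*$ (via holomorphic extension of the real-analytic vector field $Z$ and of $\rho(Z,\cdot)$), injectivity of restriction to a totally real submanifold, and $\C$-linearity in $Z$ --- are exactly what the surrounding discussion and Definition-Lemma \ref{deflem:161003} are set up to deliver. The only point worth tightening is in the $(\Rightarrow)$ direction: rather than applying the identity principle to forms on $U\cap Y_\C$ (which may have components not meeting $Y$), note that the coefficients of the difference operator $D_\C\circ\Pi^{(i)}_u(Z)-\pi^{(j)}_v(Z\vert_{Y_\C})\circ D_\C$ are holomorphic functions on the connected manifold $Y_\C$ that vanish on the maximal-dimensional totally real submanifold $Y$, hence vanish identically.
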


We define a family of totally real vector spaces of 
$\C^n$ by embedding the space $\R^n = \R^p_x \oplus \R^q_y$ as
\begin{alignat*}{2}
\iota_+&\colon \R^q_y \oplus \R^p_x
\stackrel{\sim}{\To}
\sqrt{-1}\R^q \oplus \R^p
&&=\left\{(\sqrt{-1}y_1, \cdots, \sqrt{-1}y_q, x_{q+1},\cdots x_{p+q}):
x_j, y_j \in \R\right\},\\
\iota_-&\colon \R^p_x \oplus \R^q_y
\stackrel{\sim}{\To}
\R^p \oplus \sqrt{-1}\R^q 
&&= \left\{(x_1, \cdots, x_p, \sqrt{-1}y_{p+1}, \cdots, \sqrt{-1}y_{p+q}):
x_j, y_j \in \R\right\}.
\end{alignat*}

Let us apply Lemma \ref{lem:161463} to the following setting where $n=p+q$.
\begin{alignat*}{7}
&\R^{n}&&\simeq \; &&\R^{p,q}_+
\stackbin[\iota_+]{}{\lhook\joinrel\relbar\joinrel\relbar\joinrel\relbar\joinrel\To}
 &&X_\C=\C^n 
 \stackbin[\iota_-]{}{\longleftarrow\joinrel\relbar\joinrel\relbar\joinrel\relbar\joinrel\rhook} 
 &&
\R^{p,q}_- && \simeq \;&& \R^n\\
& && &&\cup &&\;\cup &&\cup && &&\\
& \R^{n-1}&&\simeq &&\R^{p-1,q}_+ 
\lhook\joinrel\relbar\joinrel\To
&&Y_\C=\C^{n-1} 
\longleftarrow\joinrel\relbar\joinrel\rhook
&&\R^{p,q-1}_- && \simeq &&\R^{n-1}
\end{alignat*}

The holomorphic symmetric 2-tensor
\begin{equation*}
ds^2 = dz_1^2 + \cdots + dz_n^2
\end{equation*}
on $\C^n$ induces a flat pseudo-Riemannian structure on $\R^n$ of
signature $(p,q)$ by restriction via $\iota_{\pm}$. 
The resulting pseudo-Riemannian structures (and coordinates) on $\R^n$ are 
nothing but those of $\R^{p,q}_+$ and $\R^{p,q}_-$ given in Section \ref{sec:2}.

\section{Proof of Theorems \ref{thm:1613108}, \ref{thm:161456}, 
and \ref{thm:dim}}\label{sec:5}

This section gives a proof of Theorems \ref{thm:1613108}, \ref{thm:161456}, 
and \ref{thm:dim}. The key machinery for differential symmetry breaking operators
(SBOs for short) is in threefold:
\begin{enumerate}

\item[(1)] holomorphic extension of differential SBOs (Section \ref{sec:4});

\item[(2)] duality theorem between differential SBOs and homomorphisms for 
generalized Verma modules that encode branching laws
\cite[I, Thm.~2.9]{KP16};

\item[(3)] automatic continuity theorem of differential SBOs
in the Hermitian symmetric setting \cite[I, Thm.~5.3]{KP16}.
\end{enumerate}

We note that both (1) and (2) indicate the independence
of real forms as formulated in Theorem \ref{thm:161456} (2),
whereas (3) appeals to the theory of admissible restrictions of real
reductive groups \cite{K98} for a specific choice of real forms of complex
reductive Lie groups. 
\vskip 0.15in

Let $G$ be $SO_0(p+1,q+1)$, the identity component of the 
indefinite orthogonal group $O(p+1,q+1)$, $P=LN$ a maximal prabolic
subgroup of $G$ with 
Levi subalgebra $\mathrm{Lie}(L)\simeq \mathfrak{so}(p,q)+\R$,
and $H$ the identity component of $P$. Then
$G$ acts conformally on $G/H \simeq \S^{p} \times \S^{q}$
equipped with the pseudo-Riemannian structure $g_{\S^{p}}\oplus (-g_{\S^{q}})$.
Similarly, $H'$ is defined by taking
$G':=SO_0(p,q+1)$ $(\eps = +)$ or $SO_0(p+1,q)$ $(\eps = -)$.

Applying the duality theorem \cite[I, Thm.~2.9]{KP16} to the quadruple
$(G,H,G',H')$, we see that any element in 
\begin{equation}\label{eqn:Verma}
\mathrm{Hom}_{\mathfrak{g}'_\C}
(U(\mathfrak{g}'_\C) \otimes_{U(\mathfrak{p}'_\C)}
(\Exterior^{n-1-j}(\C^{n-1}) \otimes \C_{-v-j}),
U(\mathfrak{g}_\C )\otimes_{U(\mathfrak{p}_\C)}
(\Exterior^{n-i}(\C^n)\otimes \C_{-u-i}))
\end{equation}
with notation as in \cite[Sect.~2.6]{KKP16}
induces a differential symmetry breaking operator 
$\overline{D} \in \Diff_{\conf(\overline{X};\overline{Y})}(
\mathcal{E}^i(\overline{X})_u,\mathcal{E}^j(\overline{Y})_v)$
on the conformal compactification $\overline{X}$, and hence
the one on any open subset $V$ of $X$ with $V\cap Y \neq \emptyset$
by restriction. 
In order to prove Theorem \ref{thm:1613108} and Theorem \ref{thm:161456} (1),
it is sufficient to show the following converse statement.

\begin{claim}\label{claim:AB}
Any $D\in \Diff_{\conf(V;V\cap Y)}(\mathcal{E}^i(V)_u,\mathcal{E}^j(V\cap Y)_v)$
is derived from an element in \eqref{eqn:Verma}.
\end{claim}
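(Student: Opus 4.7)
The plan is to bootstrap from the Riemannian setting, where both the automatic continuity theorem and the duality between differential SBOs and Verma module homomorphisms are already available, by means of the holomorphic extension technique of Section \ref{sec:4}.

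First, by conjugating with the twisted pull-backs from Section \ref{sec:3} (which intertwine the multiplier representations, as in the proof of Theorem \ref{thm:D}), one reduces to the flat case $(X,Y)=(\R^{p,q},\R^{p-1,q})$ or $(\R^{p,q},\R^{p,q-1})$. In flat coordinates with $Y=\{x_n=0\}$ (resp.\ $\{y_n=0\}$), every translation $T$ tangent to $Y$ is an isometry with $\rho(T,\cdot)\equiv 0$ that preserves $Y$, hence $T\in\conf(X;Y)$ and \eqref{eqn:csbo} yields $[L_T,D]=0$. Writing $D\alpha=\sum c_{k,\beta,I,J}(x')\,\partial_{x_n}^k\partial_{x'}^\beta\alpha_I\big|_{x_n=0}\,dx'^J$, this commutation rigidifies every coefficient $c_{k,\beta,I,J}$ to be constant. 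Thus $D$ has constant coefficients and extends uniquely to a globally defined, constant-coefficient SBO on all of $\R^{p,q}$.

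Next, viewing $\R^{p,q}_\eps$ as the totally real submanifold $\iota_\eps(\R^n)\subset\C^n$ of Section \ref{sec:4}, $D$ is the restriction of a unique holomorphic constant-coefficient differential operator $D_\C\colon\Omega^i(\C^n)\to\Omega^j(\C^{n-1})$. By Lemma \ref{lem:161463} (combined with the identity theorem for holomorphic functions and the $\R$-linearity of the covariance relation in $Z$), the $\conf(X;Y)$-covariance of $D$ lifts to holomorphic covariance of $D_\C$ under the complexification $\mathfrak{o}(n+1,\C)$. Restricting $D_\C$ along the Riemannian embedding $\iota_+$ associated with $(p,q)=(n,0)$ then yields, again by Lemma \ref{lem:161463}, a Riemannian SBO $\tilde D\in\Diff_{\conf(\R^n;\R^{n-1})}(\mathcal{E}^i(\R^n)_u,\mathcal{E}^j(\R^{n-1})_v)$.

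Now the Riemannian automatic continuity \cite[I, Thm.~5.3]{KP16} extends $\tilde D$ to the conformal compactification $\S^n$, and the duality theorem \cite[I, Thm.~2.9]{KP16} identifies this extension with a unique element $\varphi$ of the Verma module Hom space \eqref{eqn:Verma}. Because \eqref{eqn:Verma} depends only on the complexified data, the same $\varphi$ induces, via the duality theorem applied to the quadruple $(G,H,G',H')$ for the real form $(p,q)$, a global SBO $\overline D\in\Diff_{\conf(\overline X;\overline Y)}(\mathcal{E}^i(\overline X)_u,\mathcal{E}^j(\overline Y)_v)$. The canonical construction in the duality theorem produces $\overline D$ with the same constant-coefficient symbol as $\tilde D$, namely $D_\C$; since a holomorphic differential operator is determined by its restriction to a totally real submanifold, $\overline D|_V=D$. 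This proves $D$ is derived from $\varphi$. The main obstacle is the rigidification step in the second paragraph---ensuring that $\conf(X;Y)$-covariance alone forces $D$ to have constant coefficients; once that is secured, the holomorphic continuation and the transfer to the Riemannian theorems are essentially formal.
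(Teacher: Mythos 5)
Your first two paragraphs reproduce the paper's Steps 1 and 2 essentially verbatim: reduction to the flat case via the twisted pull-backs, rigidification of the coefficients by the tangential translations (which are Killing fields, so \eqref{eqn:csbo} degenerates to commutation with $L_{\partial/\partial x_k}$), holomorphic extension to $D_\C$ on $\C^n$, and the lift of covariance to $\mathfrak{o}(n+1,\C)$ via Lemma \ref{lem:161463}. That part is correct, and your worry that the rigidification is ``the main obstacle'' is misplaced --- it is the easy step. The genuine difficulty is the one you dispatch in a single clause: ``the Riemannian automatic continuity \cite[I, Thm.~5.3]{KP16} extends $\tilde D$ to the conformal compactification $\S^n$.''

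That citation does not do what you need. \cite[I, Thm.~5.3]{KP16} is an automatic continuity theorem in the \emph{Hermitian symmetric} setting: it concerns holomorphic differential operators between spaces of holomorphic sections over a pair of bounded symmetric domains $G_\R/K_\R \supset G_\R'/K_\R'$ (open $G_\R$-orbits in $G_\C/P_\C$), and its proof rests on discrete decomposability of restrictions of holomorphic representations. The Riemannian conformal pair $(\S^n,\S^{n-1})$ with groups $(SO_0(n+1,1), SO_0(n,1))$ is a \emph{real flag variety} setting --- the orbit $G\cdot o \simeq G/P$ is closed in $G_\C/P_\C$, not open --- so the theorem does not apply to $\tilde D$ as you invoke it. Worse, ``automatic continuity in the Riemannian case'' is not an available input: it is an instance of Theorem \ref{thm:1613108}, i.e.\ part of what Claim \ref{claim:AB} is being used to prove, so assuming it is circular. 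The missing idea is the paper's Step 3: one chooses a \emph{different} real form $G_\R \simeq SO_0(n,2)$ of $G_\C = SO(n+2,\C)$, whose orbit through the origin is the open Lie ball $U\subset \C^n$ realizing the type IV Hermitian symmetric space $G_\R/K_\R$, with $G_\R'/K_\R' \simeq U\cap\{z_n=0\}$. Since your $D_\C$ is defined on all of $\Omega^i(\C^n)$, it restricts to a holomorphically covariant operator on $U$, and \emph{there} \cite[I, Thm.~5.3 (2)]{KP16} applies and identifies $D_\C$ with an element of \eqref{eqn:Verma} via the holomorphic duality theorem \cite[I, Thm.~2.12]{KP16}. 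Only after this detour through the open orbit does one return to the closed orbit $G/P$ and the real forms $\R^{p,q}$, using the independence of \eqref{eqn:Verma} from the choice of real form --- the point your last paragraph gestures at, but which cannot be reached without the Hermitian intermediary.
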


Let us prove Claim \ref{claim:AB}.

\vspace{10pt}
\noindent
\textbullet\; Step 1. Reduction to the flat case
\vspace{5pt}

By using the twisted pull-back $(\Phi_\pm)^*_v$ and 
$(\Psi_\pm)^*_v$ (see \eqref{eqn:tp}), we may and do assume that
$X=\R^{p,q}$ $(\simeq \R^n)$ and $Y$ is the hypersurface $\R^{n-1}$ 
given by the condition that 
the last coordinate is zero.
By replacing $V$ with an open subset $V'$ of $\R^n$ with $V\cap \R^{n-1} = V'\cap \R^{n-1}$ 
if necessary, we may further 
assume that $V$ is a convex neighbourhood
of $V\cap \R^{n-1}$ in $\R^n$.

\vspace{10pt}
\noindent
\textbullet\; Step 2. Holomorphic extension
\vspace{5pt}

With the coordinates $x=(x',x_n) \equiv (x_1, \cdots, x_{n-1},x_n)$
of $X=\R^n$, any differential operator 
$D\colon \mathcal{E}^i(\R^n) \to \mathcal{E}^j(\R^{n-1})$ takes
the form 
\begin{equation*}
D = \rest_{x_n=0} \circ \sum_{\alpha \in \N^{n}}a_{\alpha}(x')
\frac{\partial^{|\alpha|}}{\partial x_1^{\alpha_1} \cdots\partial x_n^{\alpha_n}}
\end{equation*}
where $a_{\alpha} \in C^\infty(\R^{n-1}) \otimes 
\mathrm{Hom}_\C\left(\Exterior^i(\C^n),\Exterior^j(\C^{n-1})\right)$
(see \cite[I, Ex.~2.4]{KP16}).
Since $Z_k:=\frac{\partial}{\partial x_k}$ $(1\leq k\leq n-1)$ 
is a Killing vector filed, namely, $Z_k \in \conf(X;Y)$ with $\rho(Z_k,\cdot)\equiv 0$,
the conformal covariance \eqref{eqn:csbo} reduces to 
$L_{\frac{\partial}{\partial x_k}} \circ D = D\circ L_{\frac{\partial}{\partial x_k}}$,
which implies that  the matrix-valued function $a_{\alpha}(x')$ is independent of $x'$
for every $\alpha$. We shall denote $a_\alpha(x')$ simply by $a_{\alpha}$.
Then $D$ extends to a holomorphic differential operator 
$D_\C\colon \Omega^i(\C^n) \to \Omega^j(\C^{n-1})$, by setting
\begin{equation*}
D_\C:=\rest_{z_n=0}\circ \sum_{\alpha \in \N^n}a_{\alpha}
\frac{\partial^{|\alpha|}}{\partial z_1^{\alpha_1} \cdots\partial z_n^{\alpha_n}}.
\end{equation*}

If $D$ satisfies the conformal covariance condition \eqref{eqn:csbo} on
$\mathcal{E}^i(V)$ for all $Z \in \conf(V;V\cap Y)\simeq
\mathfrak{o}(p,q+1)$ or $\mathfrak{o}(p+1,q)$, then
by Lemma \ref{lem:161463},
$D_\C$ satisfies
the holomorphic extension of the condition \eqref{eqn:csbo} on $\Omega^i(\C^n)$
for all $Z\in \conf(V;V\cap Y) \otimes_\R\C  \simeq  \mathfrak{o}(n+1,\C)$.

\vspace{10pt}
\noindent
\textbullet\; Step 3. Automatic continuity in the Hermitian symmetric spaces
$G_\R/K_\R \supset G_\R'/K_\R'$
\vspace{5pt}

Automatic continuity theorem is known for holomorphic differential SBOs 
in the Hermitian symmetric settings \cite[I, Thm.~5.3]{KP16}.
Then our strategy to prove Claim \ref{claim:AB} 
is to utilize the automatic continuity theorem in
the Hermitian symmetric setting by embedding a pair
$(G_\R/K_\R, G'_\R/K'_\R)$ of Hermitian symmetric spaces
into the pair $(\C^n, \C^{n-1})$ of the affine spaces as in Step 2. 
For this, we shall choose a specific real form $G_\R$
of $G_\C:=SO(n+2,\C)$ such that $G_\R$ is the group of 
biholomorphic transformations of 
a bounded symmetric domain in $\C^n$ as below.

Let $Q(\tilde{x}):=-x_0^2+x_1^2+\cdots + x_n^2-x_{n+1}^2$ be the quadratic form on
$\R^{n+2}$, and $G_\R$ the identity component of the isotropy group
\begin{equation*}
\{h \in GL(n+2,\R): Q(h\cdot \tilde{x}) = Q(\tilde{x}) \; 
\text{for all $\tilde{x} \in \R^{n+2}$}\}.
\end{equation*}
Then $K_\R:=G_\R\cap SO(n+2)$
is a maximal compact subgroup of $G_\R \simeq SO_0(n,2)$ such that 
$G_\R/K_\R$ is the Hermitian symmetric space of type IV in the \'E.\ Cartan
classification. We take $G_\R'$ to be the stabilizer of $x_n$. 
Then $G_\R' \simeq SO_0(n-1,2)$.

We use the notation as in \cite[II, Sect.~6]{KP16}, and identify $\C^n$ with the open
Bruhat cell of the complex quadric 
\begin{equation*}
\Q^n\C = \{\tilde{z}\in \C^{n+2}\setminus \{0\}: 
Q(\tilde{z}) = 0\}/ \C^\times
\simeq G_\C/P_\C.
\end{equation*}
Then
$G_\R/K_\R$ is realized as the Lie ball
\begin{equation*}
U=\{z \in \C^n : |z\,\trans z|^2+1-2\bar{z}\, \trans z >0, \; |z\,\trans z| < 1 \}.
\end{equation*}
We compare the real form $G$ of $G_\C$ with Lie algebra
$\mathfrak{conf}(X)\simeq \mathfrak{o}(p+1,q+1)$ in Step 1
and another real form $G_\R \simeq SO_0(n,2)$ in Step 3 $(n=p+q)$.
The point here is that the $G$-orbit $G\cdot o \simeq G/P$ through the origin
$o=eP_\C \in G_\C/P_\C$ is closed in $G_\C/P_\C$,
while the $G_\R$-orbit $G_\R\cdot o\simeq G_\R/K_\R$ is open in $G_\C/P_\C$,
as is summarized in the figure below.
\begin{alignat*}{7}
&G_\R/K_\R&&\simeq \; &&U\; 
\stackbin[\text{open}]{}{\subset} 
 &&\; \C^n 
\qquad\quad\quad\;\; \stackbin[\footnotesize{\text{Bruhat cell}}]{}{\subset} 
 &&
\qquad \quad \; \; \mathbb{Q}^n\C&& \simeq \;\;&& G_\C/P_\C\\
& && && &&\; \cup && \qquad \qquad \cup && &&
\; \; \, \cup\;  \tiny{\text{totally real}}\\
& && &&
&&\, \R^{p,q} \hspace{22pt} 
\stackbin[\footnotesize{\text{conformal compactfication}}]{}{\subset} 
&& \quad  \; \; (S^p\times S^q)/\Z_2&&  \simeq \;\; &&G/P
\end{alignat*}
We note that the $G_\R'$-orbit $G'_\R \cdot o \simeq G_\R'/K_\R'$
is realized as the subsymmetric domain 
$U \cap \{z_n = 0\}\simeq \C^{n-1}$. 
Since the holomorphic differential operator $D_\C$ is defined
on $\Omega^i(\C^n)$,
$D_\C$ induces  a holomorphic differential
operator 
\begin{equation}\label{eqn:DGK}
D_\C\vert_{G_\R/K_\R}\colon
\Omega^i(G_\R/K_\R) \To \Omega^j(G_\R'/K_\R') 
\end{equation}
via the 
inclusion $G_\R/K_\R \simeq U \subset \C^n$.

Then the automatic continuity theorem
\cite[I, Thm.~5.3 (2)]{KP16} (and its proof), 
applied to \eqref{eqn:DGK}
implies that
$D_\C\vert_{G_\R/K_\R}$ is 
derived from an element of \eqref{eqn:Verma}
via the duality theorem in the holomorphic setting
(see \cite[I, Thm.~2.12]{KP16}). Thus the proof of Claim \ref{claim:AB} is completed.
Therefore Theorem \ref{thm:1613108} and Theorem \ref{thm:161456} (1)
follow from \cite[I, Thm.~2.9]{KP16}.

Since \eqref{eqn:Verma} is independent of the choice of real forms, 
Theorem \ref{thm:161456} (2) is now clear.

\begin{proof}[Proof of Theorem \ref{thm:dim}]
Owing to Theorems \ref{thm:1613108} and \ref{thm:161456},
Theorem \ref{thm:dim} is reduced to the Riemannian case
$p=0$ and $\eps = -$ or $q=0$ and $\eps = +$.
Then the assertion follows from the classification results
\cite[Thm.~1.1]{KKP16} for the (disconnected) conformal group and
from a discussion on the connected group case (see \cite[Thm.~2.10]{KKP16}).
\end{proof}

\section{Proof of Theorem \ref{thm:StageC}}\label{sec:6}

In this section, we give a proof of Theorem \ref{thm:StageC} in Section \ref{sec:2} 
by reducing it to the 
Riemannian case $(p,q,\eps) = (n,0,+)$ or $(0,n,-)$
which was established in \cite[Thms.~1.5, 1.6, 1.7 and 1.8]{KKP16}.
For this, we apply Definition-Lemma \ref{deflem:161003} to the totally real 
embedding $\iota_{\pm}\colon \R^{p,q}_\pm \hookrightarrow \C^{p+q}$.

With the coefficients $a_k(\mu,\ell)$ given in \eqref{eqn:ak}, 
we define a family of (scalar-valued) holomorphic differential operators on $\C^n$ by
\begin{equation*}
(\mathcal{D}^\mu_\ell)_\C
:=\sum_{k=0}^{\left[\frac{\ell}{2}\right]}
a_k(\mu,\ell)\left(-\sum_{j=1}^{n-1}\frac{\partial^2}{\partial z_j^2}\right)^k
\left(\frac{\partial}{\partial z_n}\right)^{\ell-2k}, 
\end{equation*}
which are the holomorphic extensions of the operators 
$(\mathcal{D}^\mu_\ell)_{\R^{n,0}_+}$ defined in the Riemannian case,
that is,
$(\rest_{\R^{n,0}_+})_*\big((\mathcal{D}^\mu_\ell)_\C\big) = 
(\mathcal{D}^\mu_\ell)_{\R^{n,0}_+}$.
Likewise, we extend $(\mathcal{D}^{i\to j}_{u,\ell})_{\R^{n,0}_+}$ to 
a (matrix-valued) holomorphic differential operator
\begin{equation*}
(\mathcal{D}^{i\to j}_{u,\ell})_\C\colon 
\Omega^i(\C^n) \To \Omega^j(\C^{n-1})
\end{equation*}
in such a way that $(\rest_{\R^{n,0}_+})_* (\mathcal{D}^{i\to j}_{u,\ell})_\C$
coincides with $(\mathcal{D}^{i\to j}_{u,\ell})_{\R^{n,0}_+}$.
By definition of $(\mathcal{D}^{i\to j}_{u,\ell})_{\R^{p,q}_+}$,
it is readily seen that 
$(\rest_{\R^{p,q}_+})_*(\mathcal{D}^{i\to j}_{u,\ell})_\C
=(\mathcal{D}^{i\to j}_{u,\ell})_{\R^{p,q}_+}$
for all $(p,q)$ with $p+q=n$.
Concerning the other real form $\R^{p,q}_-$, we have the following.

\begin{lem}\label{lem:161482}
$(\rest_{\R^{p,q}_-})_*(\mathcal{D}^{i\to j}_{u,\ell})_\C=
e^{-\frac{\pi \sqrt{-1}(\ell+i-j)}{2}}(\mathcal{D}^{i\to j}_{u,\ell})_{\R^{p,q}_-}$.
\end{lem}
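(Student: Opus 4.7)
The plan is to decompose the holomorphic differential operator $(\mathcal{D}^{i\to j}_{u,\ell})_\C$ into its elementary building blocks on $\C^n$ and to track, via the chain rule, the factor of $\sqrt{-1}$ accumulated by each block under pull-back along the totally real embedding $\iota_-\colon\R^{p,q}_-\hookrightarrow\C^n$. The blocks are the scalar holomorphic operators $(\mathcal{D}^\mu_m)_\C$, the exterior derivative $d$, the holomorphic codifferential $d^*_\C$ associated with $ds^2=\sum_{j=1}^n dz_j^2$, and the interior multiplication $\iota_{\partial/\partial z_n}$.

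First I would establish four transformation rules. From $z_j\circ\iota_-=x_j$ for $j\leq p$ and $z_j\circ\iota_-=\sqrt{-1}\,y_j$ for $j>p$, the chain rule yields $\iota_-^*\circ\frac{\partial}{\partial z_j}=\frac{\partial}{\partial x_j}\circ\iota_-^*$ for $j\leq p$ and $\iota_-^*\circ\frac{\partial}{\partial z_j}=-\sqrt{-1}\,\frac{\partial}{\partial y_j}\circ\iota_-^*$ for $j>p$. Combined with the algebraic identity $(-1)^k(-\sqrt{-1})^{m-2k}=(-\sqrt{-1})^m$ valid for $0\leq 2k\leq m$, this gives
\begin{equation*}
(\rest_{\R^{p,q}_-})_*(\mathcal{D}^\mu_m)_\C=(-\sqrt{-1})^m(\mathcal{D}^\mu_m)_{\R^{p,q}_-}.
\end{equation*}
The exterior derivative is naturally compatible with pull-back, $\iota_-^*\circ d=d\circ\iota_-^*$. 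Since $\iota_-$ is isometric with respect to $ds^2$, i.e.\ $\iota_-^*(ds^2)=g_{\R^{p,q}_-}$, the codifferential is also natural, $\iota_-^*\circ d^*_\C=d^*_{\R^{p,q}_-}\circ\iota_-^*$, the $\sqrt{-1}$-factors from the chain rule on derivatives being exactly absorbed by the corresponding ones on the inverse metric $(g_\C)^{jk}=\delta^{jk}$. Finally, a direct check on basis forms, using that $n>p$ because $q\geq 1$, gives
\begin{equation*}
\iota_-^*\circ\iota_{\partial/\partial z_n}=-\sqrt{-1}\cdot\iota_{\partial/\partial y_n}\circ\iota_-^*.
\end{equation*}

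Substituting these four rules term by term into the explicit formula for $(\mathcal{D}^{i\to j}_{u,\ell})_\C$---which is obtained by holomorphic extension of $(\mathcal{D}^{i\to j}_{u,\ell})_{\R^{n,0}_+}$---the bookkeeping in each of the four cases $j\in\{i-2,i-1,i,i+1\}$ shows that every summand collects the same overall phase $(-\sqrt{-1})^{\ell+i-j}=e^{-\pi\sqrt{-1}(\ell+i-j)/2}$. The key observation is that each scalar factor $(\mathcal{D}^\cdot_m)_\C$ contributes $(-\sqrt{-1})^m$ and each $\iota_{\partial/\partial z_n}$ contributes $-\sqrt{-1}$, while the sign discrepancies between the $+$- and $-$-conventions in the paper---precisely the substitution rule ``$d^*_{\R^{p,q}_-}$ replaced by $-d^*_{\R^{p,q}_+}$''---absorb exactly the residual $(-\sqrt{-1})^2=-1$ that arises because different summands have different scalar orders. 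The main obstacle is the naturality of $d^*_\C$ in the first step, since codifferentials are metric-dependent and need not commute with arbitrary isometric embeddings; here this reduces to the special isometry property $\iota_-^*(ds^2)=g_{\R^{p,q}_-}$ of the totally real embedding together with an explicit verification in local coordinates (which one can perform directly on basis $1$-forms and then extend by $d^*$ being a first-order differential operator).
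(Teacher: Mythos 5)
Your proposal is correct and follows essentially the same route as the paper: the paper's proof consists precisely of tabulating the transformation rules under $(\rest_{\R^{p,q}_-})_*$ for the basic blocks $d_\C$, $d^*_\C$, $\partial/\partial z_n$, $\iota_{\partial/\partial z_n}$ and $(\mathcal{D}^\mu_\ell)_\C$ (with the same factors $-\sqrt{-1}=1/\sqrt{-1}$ and $e^{-\pi\sqrt{-1}\ell/2}$ you derive) and then deducing the lemma by term-by-term substitution. You merely spell out the chain-rule derivations and the phase bookkeeping that the paper leaves implicit.
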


\begin{proof}
The assertion is deduced from the formul{\ae} of $(\rest_{\R^{p,q}_-})_*$ for 
the following basic operators.
(For the convenience of the reader, we also list the cases 
$\R^{p,q}_+$ as well.)
\begin{table}[H]
\begin{center}
\begin{tabular}{c|ccccc}
 & $d_\C$ & $d_\C^*$& $\frac{\partial}{\partial z_n}$ & $\iota_{\frac{\partial}{\partial z_n}}$ &
 $(\mathcal{D}^\mu_\ell)_\C$\\[5pt]
\hline
\rule{0pt}{3ex}
$(\rest_{\R^{p,q}_+})_*$ &
$d_{\R^{p,q}_+}$ &$d^*_{\R^{p,q}_+}$&$\frac{\partial}{\partial x_n}$
& $\iota_{\frac{\partial}{\partial x_n}}$ & $(\mathcal{D}^\mu_{\ell})_{\R^{p,q}_+}$\\[5pt]
$(\rest_{\R^{p,q}_-})_*$ &
$d_{\R^{p,q}_-}$ &$d^*_{\R^{p,q}_-}$&$\frac{1}{\sqrt{-1}} \frac{\partial}{\partial y_n}$
&$\frac{1}{\sqrt{-1}}\iota_{\frac{\partial}{\partial y_n}}$ & 
$e^{-\frac{\pi \sqrt{-1}\ell}{2}}(\mathcal{D}^\mu_\ell)_{\R^{p,q}_-}$
\end{tabular}
\end{center}
\end{table}
\end{proof}

We are ready to complete the proof of Theorem \ref{thm:StageC}.

\begin{proof}[Proof of Theorem \ref{thm:StageC}]
Since $(\mathcal{D}^{i\to j}_{u,\ell})_{\R^{n,0}_+} \in 
\Diff_{\conf(\R^n;\R^{n-1})}(\mathcal{E}^i(\R^n)_u, \mathcal{E}^j(\R^{n-1})_v)$ 
by \cite[Thms.~1.5, 1.6, 1.7, 1.8]{KKP16}, the holomorphic differential operator
$(\mathcal{D}^{i\to j}_{u,\ell})_\C$ satisfies the holomorphic covariance condition
by Lemma \ref{lem:161463}. In turn, we conclude Theorem \ref{thm:StageC}
by Lemmas \ref{lem:161482} and \ref{lem:161463}.

\end{proof}

\section{Four-dimensional example}\label{sec:7}

In contrast to the multiplicity-free theorem (\cite[Thm.~1.1]{KKP16}) for 
differential SBOs for (disconnected) conformal
\emph{groups} 
$(\Conf(X), \Conf(X;Y))$ when $(X,Y) = (\S^n,\S^{n-1})$ $(n \geq 3)$,
it may happen that an analogous statement for 
the \emph{Lie algebras} $(\conf(X), \conf(X;Y))$
does not hold anymore. In fact, for some $u,v,i,j$, one has
\begin{equation}\label{eqn:two}
\dim_\C \Diff_{\conf(X;Y)}(\mathcal{E}^i(X)_u,\mathcal{E}^j(Y)_v) > 1
\end{equation}
(or equivalently, $=2$).

In this section we first address the question when and how \eqref{eqn:two} happens 
and then describe the corresponding generators
when $(X,Y) = (\R^{p,q},\R^{p-1,q})$ with $p+q$ $(=n) = 4$.

As we have seen in Theorems \ref{thm:dim} and \ref{thm:StageC},
there are two types of conditions on $(i,j)$, namely,
\begin{equation*}
-1 \leq i-j \leq 2 \quad \text{or} \quad n-2 \leq i+j \leq n+1,
\end{equation*}
for which nontrivial differential symmetry breaking operators
$\mathcal{E}^i(X)_u \to \mathcal{E}^j(Y)_v$ exist for some $u,v\in\C$.
(The latter inequality arises from the composition of the Hodge star
operator with respect to the pseudo-Riemannian metric.)
It turns out that \eqref{eqn:two} happens only if these two conditions 
are simultaneously fulfilled, that is, only if 
\begin{equation*}
-1 \leq i- j \leq n \quad \text{and} \quad n-2\leq i+j \leq n+1.
\end{equation*}

The four-dimensional case is illustrative to understand \eqref{eqn:two}
for the arbitrary dimension $n$. We give a complete list of parameters
$(i,j,u,v)$ for which \eqref{eqn:two} happens together with explicit generators
of $\Diff_{\conf(X;Y)}(\mathcal{E}^i(X)_u,\mathcal{E}^j(Y)_v)$.

Let $X=\R^{p,q}_+$ and $Y=\R^{p-1,q}_+$ with $n=p+q=4$.
We shall simply write as $(X,Y) = (\R^{p,q},\R^{p-1,q})$. 
(The case $(X,Y)=(\R^{p,q}_-,\R^{p,q-1}_-)$ is essentially the same and we omit it.)
We set
\begin{alignat*}{4}
&A:=\rest_{x_4=0} \circ\, &&d, \quad
&&B:=\rest_{x_4=0} \circ\, &&d^*,\\
&C:=\rest_{x_4=0} \circ\, &&\iota_{\frac{\partial}{\partial x_4}}d, \quad
&&D:=\rest_{x_4=0} \circ\, &&\iota_{\frac{\partial}{\partial x_4}}d^*.
\end{alignat*}
By using the formul{\ae} in \cite[Ch.~8.\ Sect.~5]{KKP16},
we readily see that
\begin{equation}\label{eqn:ABCD}
D \circ *_{\R^{p,q}}=\pm *_{\R^{p-1,q}} \circ A,\quad
C\circ *_{\R^{p,q}}=\pm*_{\R^{p-1,q}}\circ B.
\end{equation}

\begin{thmintro}\label{thm:F}
Suppose $(X,Y) = (\R^{p,q},\R^{p-1,q})$ with
$p+q =4$ and $p\geq 1$. Then \eqref{eqn:two} occurs 
if and only if $(i,j, u,v)$ 
appears in the nonempty boxes
in the table below.
Moreover, the pairs of operators in the table provide generators
of $\Diff_{\conf(X;Y)}(\mathcal{E}^i(X)_u,\mathcal{E}^j(Y)_v)$.

\footnotesize{
\begin{table}[H]
\begin{center}
\setlength\tabcolsep{1pt}.
\begin{tabular}{c|c|c|c|c}
\backslashbox{$i$}{$j$}& $0$ & $1$ & $2$ & $3$  \\
\hline
\; $0$ \; & & & & \\
\hline
&  & $u=0$, $v=1$, & 
        $u=v=0$, & \rule[0mm]{0mm}{5mm} \\
&  & $\ell=1$ & 
        $\ell=1$ & \rule[0mm]{0mm}{5mm} \\[5pt]
\cdashline{3-4} 
\; \raisebox{12pt}{$1$} \;&  & $*_{\R^{p-1,q}}\circ A$, & $A$, & \rule[0mm]{0mm}{5mm} \\
                                    &  
                                    &$C$
                                    &$*_{\R^{p-1,q}}\circ C$
                                    & \rule[0mm]{0mm}{3mm} \\[5pt]
\hline
&\; $u=0$, $v=3$, \; &\, $v-u\in \N_+$, & 
    \,$v-u\in\N$, &\, $u=v=0$, \rule[0mm]{0mm}{5mm} \\
&\,$\ell=1$ \, &\, $\ell=v-u-1$\, & 
   $\ell=v-u$\, &\,$\ell=1$ \rule[0mm]{0mm}{5mm} \\[5pt]
\cdashline{2-5} 
\; \raisebox{12pt}{$2$} \;
& $D$,  &$(\mathcal{D}^{2\to 1}_{u,\ell})_+$, & $(\mathcal{D}^{2\to 2}_{u,\ell})_+$,
& $A$, \rule[0mm]{0mm}{5mm} \\
                                  &$*_{\R^{p-1,q}}\circ A$
                                  &\; $*_{\R^{p-1,q}}\circ(\mathcal{D}^{2\to 2}_{u,\ell})_+$ \;
                                  &\; $*_{\R^{p-1,q}}\circ(\mathcal{D}^{2\to 1}_{u,\ell})_+$ \;
                                  &\; $*_{\R^{p-1,q}}\circ D$\, 
                                   \rule[0mm]{0mm}{5mm} \\[5pt]
\hline
& & $u=-2$, $v=1$, & 
        \, $u=-2$, $v=0$, \, & \rule[0mm]{0mm}{5mm} \\
& & $\ell=1$ & 
        \,  $\ell=1$ \, & \rule[0mm]{0mm}{5mm} \\[5pt]
\cdashline{3-4} 
\; \raisebox{12pt}{$3$} \;
&  & $D$, & $B$, & \rule[0mm]{0mm}{5mm} \\
                                  &  
                                  & $*_{\R^{p-1,q}}\circ B$ 
                                  & $*_{\R^{p-1,q}}\circ D$  
                                  & \rule[0mm]{0mm}{3mm} \\[5pt]
\hline
\; $4$ \;& & & &
\end{tabular}
\end{center}
\end{table}}%
\end{thmintro}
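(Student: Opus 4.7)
The plan is to reduce to the Riemannian case via Theorem \ref{thm:161456}(2), use Remark \ref{rem:20161014} to describe the space $\Diff_{\conf(X;Y)}(\mathcal{E}^i(X)_u,\mathcal{E}^j(Y)_v)$ as spanned by two types of generators—a direct operator and a Hodge-composed one—and then enumerate in dimension four the parameters for which both are simultaneously nonzero.

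First I would apply Theorem \ref{thm:161456}(2) to pass to the Riemannian setting $(X,Y)=(\R^4,\R^3)$. By Remark \ref{rem:20161014}, every element of $\Diff_{\conf(X;Y)}(\mathcal{E}^i(X)_u,\mathcal{E}^j(Y)_v)$ is a linear combination of (i) a renormalized direct operator $(\widetilde{\mathcal{D}}^{i\to j}_{u,\ell})_+$ and (ii) a Hodge-composed operator $*_{\R^{p-1,q}}\circ(\widetilde{\mathcal{D}}^{i\to n-1-j}_{u,\ell'})_+$. By Theorem \ref{thm:dim}(2), generator (i) can be nonzero only if $-1\leq i-j\leq 2$ with $\ell:=v+j-u-i\in\N$, while (ii) can be nonzero only if $-1\leq i-(n-1-j)\leq 2$, equivalently $n-2\leq i+j\leq n+1$, with $\ell'\in\N$. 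Theorem \ref{thm:dim}(1) caps the dimension at $2$, so multiplicity $2$ occurs precisely when both families produce a nonzero operator and these are linearly independent.

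Intersecting the two range conditions for $n=4$ yields exactly the eight pairs $(i,j)\in\{(1,1),(1,2),(2,0),(2,1),(2,2),(2,3),(3,1),(3,2)\}$, matching the nonempty boxes of the table. For each such $(i,j)$ I would then track the nonvanishing conditions on the renormalized operators from the classification \cite[Thms.~1.5--1.8]{KKP16}, which dictate the admissible $(u,v,\ell)$ printed in the corresponding cell; the exceptional operators $A,B,C,D$ arise as the explicit shapes that $(\widetilde{\mathcal{D}}^{i\to j}_{u,\ell})_+$ assumes at low $\ell$ after renormalization, and the identities \eqref{eqn:ABCD} let one rewrite the Hodge-composed generators tidily in terms of the same building blocks. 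Linear independence in each cell is automatic: by the multiplicity-one theorem \cite[Thm.~1.1]{KKP16} for the \emph{disconnected} conformal group, the direct and Hodge-composed generators transform under different characters of $\Conf(X;Y)/\Conf_0(X;Y)$ (the Hodge star changes sign under orientation-reversing components), hence cannot be proportional when both are nonzero. Combined with the upper bound $\dim\leq 2$, each displayed pair therefore spans the full space.

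The main obstacle is the case-by-case bookkeeping in the middle step: for each of the eight pairs $(i,j)$ one must verify precisely which $(u,v)$ make both $(\widetilde{\mathcal{D}}^{i\to j}_{u,\ell})_+$ and $(\widetilde{\mathcal{D}}^{i\to n-1-j}_{u,\ell'})_+$ survive after renormalization, and match the resulting conditions with the table entries. This relies on the nonvanishing of the renormalized Gegenbauer polynomials $\widetilde{C}^\mu_\ell$ away from a discrete set, together with a careful comparison of the normal-direction factors $\tfrac{\partial}{\partial x_n}$ and $\iota_{\frac{\partial}{\partial x_n}}$ appearing in the two operators. For $(i,j,u,v)$ outside the table, exactly one of the two generators is forced to vanish, so the dimension drops to at most $1$, establishing the converse.
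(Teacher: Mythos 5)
Your proposal follows essentially the same route as the paper: the paper's proof is a one-line reduction to the Riemannian classification, citing Theorems \ref{thm:161456} and \ref{thm:StageC} together with \cite[Thms.~1.1 and 2.10]{KKP16}, and your plan simply unpacks what that citation entails (the two families of generators, the intersection of the two range conditions on $(i,j)$, and linear independence via the parity character distinguishing the Hodge-composed generator). The details you defer --- the case-by-case nonvanishing bookkeeping for $(u,v,\ell)$ --- are exactly the content of the cited classification theorems, so nothing essential is missing.
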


\begin{rem}
(1) By the duality theorem \cite[I, Thm.~9]{KP16},
the multiplicity in the branching laws of the 
generalized Verma modules, given as the dimension of \eqref{eqn:Verma} 
is also equal to 2 for 
the parameters in Theorem \ref{thm:F} (cf.\ \cite{K12}).\\
(2) For $(p,q)=(1,3)$, Maxwell's equations are expressed as
$d\alpha = 0$ and $d^*\alpha = 0$ for $\alpha \in \mathcal{E}^2(\R^{1,3})$,
see \cite{KW14} for instance.\\
(3) $\mathcal{D}^{2 \to 1}_{u,\ell}$ reduces to $-\rest_{x_4=0}\circ d^*$ if
$(u,\ell) = (0,1)$.
\end{rem}

\begin{proof}[Proof of Theorem \ref{thm:F}]
The assertions follow from \cite[Thms.~1.1 and 2.10]{KKP16}
owing to Theorems \ref{thm:161456} and \ref{thm:StageC}.
\end{proof}




\begin{thebibliography}{99} 

\bibitem{Branson} 
T.P.~Branson, 
Conformally covariant equations on differential forms.
Comm.\ Partial Differential Equations \textbf{7}, 
\href{http://dx.doi.org/10.1080/03605308208820228}{393--431} 
(1982).

\bibitem{BG05}
T.~Branson, A.R.~Gover,
Conformally invariant operators, differential forms, 
cohomology and a generalisation of $Q$-curvature.
Comm.\ Partial Differential Equations \textbf{30},
\href{http://dx.doi.org/10.1080/03605300500299943}{1611--1669} 
(2005).

\bibitem{FG13}
C.~Fefferman, C.R.~Graham,
Juhl's formul{\ae} for GJMS operators and $Q$-curvatures.
J.\ Amer.\ Math.\ Soc.\ \textbf{26}, 
\href{http://dx.doi.org/10.1090/S0894-0347-2013-00765-1}{1119--1207}
(2013).

\bibitem{FT}
E.S.~Fradkin, A.A.~Tseytlin, 
Asymptotic freedom in extended conformal supergravities.
Phys.\ Lett.\ B \textbf{110}, 
\href{http://dx.doi.org/10.1016/0370-2693(82)91018-8}{117--122} 
(1982).

\bibitem{GJMS}
C.R.~Graham, R.~Jenne, L.J.~Mason, G.A.J.~Sparling, 
Conformally invariant powers of the Laplacian. I. Existence. 
J.\ London Math.\ Soc.\ (2) \textbf{46}, 
\href{http://dx.doi.org/10.1112/jlms/s2-46.3.557}{557--565} 
(1992). 

\bibitem{Juhl}
A.~Juhl, 
\emph{Families of Conformally Covariant Differential Operators, 
$Q$-Curvature and Holography.} 
Progress in Mathematics, 
\href{http://link.springer.com/book/10.1007/978-3-7643-9900-9/page/1}{vol.\ 275}
(Birkh\"auser, Basel, 2009). 


\bibitem{K98}
T.~Kobayashi,
Discrete decomposability of the restriction of $A_\mathfrak{q}(\lambda)$ 
with respect to reductive subgroups
and its applications. 
Invent.\ Math.\ \textbf{117}, 
\href{http://dx.doi.org/10.1007/BF01232239}{181--205} (1994);
II:\ Micro-local analysis and asymptotic $K$-support. 
Ann.\ Math.\ \textbf{147}, 
\href{http://dx.doi.org/10.2307/120963}{709--729} (1998);
III.\ Restriction of Harish-Chandra modules and associated varieties. 
Invent.\ Math.\ \textbf{131}, 
\href{http://dx.doi.org/10.1007/s002220050203}{229--256} (1998).

\bibitem{K12}
T.~Kobayashi,
Restrictions of generalized Verma modules to symmetric pairs. 
Transform.\ Groups \textbf{17},  
\href{http://dx.doi.org/10.1007/s00031-012-9180-y}{523--546}
(2012).

\bibitem{K13} 
T.~Kobayashi,
F-method for constructing equivariant differential operators,
in \emph{Geometric Analysis and Integral Geometry},
eds.\ by E.T.~Quinto, F.B.~Gonzalez, and J.~Christensen.
Contemporary Mathematics, vol.\ 598
(American Mathematical Society, Providence, 2013), 
\href{http://dx.doi.org/10.1090/conm/598/11998}{pp.\ 141--148}.

\bibitem{K14} 
T.~Kobayashi,
F-method for symmetry breaking operators.
Differential Geom.\ Appl.\ \textbf{33},
\href{http://dx.doi.org/10.1016/j.difgeo.2013.10.003}{272--289}
(2014).

\bibitem{K15} 
T.~Kobayashi,
A program for branching problems in the representation theory of real reductive groups,
in \emph{Representations of Lie Groups: In Honor of David A.~Vogan, Jr.\ on his 60th Birthday},
eds.\ by M.~Nevins and P.~Trapa.\
Progress in Mathematics, vol.\ 312
(Birkh\"auser, 2015), 
\href{http://dx.doi.org/10.1007/978-3-319-23443-4_10}{pp.\ 277--322}.

\bibitem{KKP16}
T.~Kobayashi, T.~Kubo, M.~Pevzner,
\emph{Conformal Symmetry Breaking Operators for Differential Forms on Spheres}.
Lecture Notes in Mathematics, 
\href{http://dx.doi.org/10.1007/978-981-10-2657-7}{vol.\ 2170}
(Springer-Nature), xiii+192 pages, 2016.
ISBN: 978--981--10--2656--0.
 
\bibitem{KO03}
T.~Kobayashi, B.~\O rsted,
Analysis on the minimal representation of $O(p,q)$.
I. Realization via conformal geometry.
Adv.\ Math.\ \textbf{180}, 
\href{http://dx.doi.org/10.1016/S0001-8708(03)00012-4}{486--512};
II. Branching laws. 
Adv.\ Math.\ \textbf{180}, 
\href{http://dx.doi.org/10.1016/S0001-8708(03)00013-6}{513--550};
III. Ultrahyperbolic equations
on $\R^{p-1,q-1}$.
Adv.\ Math.\ \textbf{180}, 
\href{http://dx.doi.org/10.1016/S0001-8708(03)00014-8}{551--595}
(2003).

\bibitem{KOSS15}
T.~Kobayashi, B.~\O rsted, P.~Somberg, V.~Sou\v cek, 
Branching laws for Verma modules and applications in parabolic geometry. I. 
Adv.\ Math.\ \textbf{285},
\href{http://dx.doi.org/10.1016/j.aim.2015.08.020}{1796--1852}
(2015).

\bibitem{KP16}
T.~Kobayashi, M.~Pevzner,
Differential symmetry breaking operators. 
I.\ General theory and F-method.\
Selecta Math.\ (N.S.) \textbf{22},
\href{http://dx.doi.org/10.1007/s00029-015-0207-9}{801--845}; 
II.\ Rankin--Cohen operators for symmetric pairs. 
Selecta Math.\ (N.S.) \textbf{22}, 
\href{http://dx.doi.org/10.1007/s00029-015-0208-8}{847--911}
(2016). 

\bibitem{KS13}
T.~Kobayashi, B.~Speh,
\emph{Symmetry Breaking for Representations of Rank One Orthogonal Groups}.
Memoirs of American Mathematical Society, vol.\ 238
(American Mathematical Society, Providence, 2015). 
 \href{http://dx.doi.org/10.1090/memo/1126}{ISBN: 978-1-4704-1922-6}.

\bibitem{KW14}
B.~Kostant, N.R.~Wallach, 
Action of the conformal group on steady state solutions to 
Maxwell's equations and background radiation,
in \emph{Symmetry: Representation Theory and Its Applications}.
Progress in Mathematics,
\href{http://dx.doi.org/10.1007/978-1-4939-1590-3_14}{vol.\ 257}
(Birkh\"auser/Springer, New York, 2014),
pp. 385--418.

\bibitem{P08}
S.~Paneitz, 
A quartic conformally covariant differential operator 
for arbitrary pseudo-Riemannian manifolds.\
SIGMA Symmetry Integrability Geom. Methods Appl.\
vol. \ 4, (2008),
\href{http://www.emis.de/journals/SIGMA/2008/}{paper 036}. 

\bibitem{Segal76}
I.E.~Segal,
\emph{Mathematical Cosmology and Extragalactic Astronomy}.
Pure and Applied Mathematics, vol.\ 68
(Academic Press, New York, 1976).


\end{thebibliography}
\end{document}